\numberwithin{equation}{section}
\theoremstyle{plain}
\newtheorem{theorem}{Theorem}[section]
\newtheorem{Def}[theorem]{Definition}
\newtheorem{corollary}[theorem]{Corollary}
\newtheorem{proposition}[theorem]{Proposition}
\theoremstyle{definition}
\newtheorem{definition}[theorem]{Definition}
\theoremstyle{remark}
\newtheorem{remark}[theorem]{Remark}
\newtheorem{case[theorem]}{Case}
\def \R{{\mathbb R}}
\def \C{{\mathbb C}}
\def\H{{\mathbb H}}
\def\norm#1.#2.{\lVert#1\rVert_{#2}}
\def\R{\mathbb R}
\def \M{{\mathcal M}}
\def \W{{\mathcal W}}
\def \H{{\mathcal H}}
\title[Uncertainty principles for the windowed Opdam--Cherednik transform]{Uncertainty principles for the windowed Opdam--Cherednik transform}
\author{Shyam Swarup Mondal} 
\author{Anirudha Poria}
\thanks{Research supported by ERC Starting Grant No. 713927.}
\keywords{ Opdam--Cherednik transform; windowed Opdam--Cherednik transform;  Heisenberg-type uncertainty inequality; Donoho--Stark’s uncertainty principle; Benedicks-type  uncertainty principle}
\subjclass[2010]{Primary 44A15; Secondary 42C20, 33C45.}
\date{\today}
\begin{document}
\maketitle

\address{Shyam Swarup Mondal, Department of Mathematics, Indian Institute of Technology Delhi, New Delhi 110016, India}\\
%\email{mondalshyam055@gmail.com}

\address{Anirudha Poria, Department of Mathematics, Bar-Ilan University, Ramat-Gan 5290002, Israel}\\

\address{Department of Mathematics, SRM Institute of Science and Technology, Kattankulathur 603203, Tamil Nadu, India}\\

\thanks{Corresponding Author: Anirudha Poria, Department of Mathematics, Bar-Ilan University, Ramat-Gan 5290002, Israel}\\
\email{anirudhamath@gmail.com, anirudhp@srmist.edu.in}

\begin{abstract} 
In this paper, we study a few versions of the uncertainty principle for the windowed Opdam--Cherednik transform. In particular, we establish the uncertainty principle for orthonormal sequences, Donoho--Stark's uncertainty principle, Benedicks-type uncertainty principle, Heisenberg-type uncertainty principle and local uncertainty inequality for this transform. We also obtain the Heisenberg-type uncertainty inequality using the $k$-entropy of the windowed Opdam--Cherednik transform. 
\end{abstract}

\section{Introduction}
\noindent Uncertainty principles play a fundamental role in the field of  mathematics, physics, in addition to  some engineering areas such as  signal processing, image processing, quantum theory,  optics, and many other well known areas  \cite{ric14, deb15, gro01, dono89, bial85}. In quantum physics, it says  that a particle’s speed and position cannot both be measured with infinite precision.  The classical Heisenberg uncertainty principle was established  in the Schr\"odinger space (the square-integrable function space). It states that a non-zero function and its Fourier transform cannot be both sharply localized, i.e.,  it is impossible for a non-zero function and its Fourier transform to be simultaneously small.  This phenomenon has been under intensive study for almost a century now and extensively investigated in different settings. One can formulate different forms of the uncertainty principle depending on various ways of measuring the localization of a function.    Uncertainty principles can be divided  into  two  categories:  quantitative and qualitative uncertainty principles. Quantitative uncertainty principles are some special types of inequalities that   tells  us  how a function and its Fourier transform relate. For example,  Benedicks \cite{ben95}, Donoho and Stark \cite{dono89}, and  Slepian and Pollak \cite{sle61} gave quantitative uncertainty principles for the Fourier transforms.  On the other hand, qualitative uncertainty principles   imply the vanishing of a function under some strong conditions on the function. In particular,    Hardy \cite{har33}, Morgan \cite{mor34}, Cowling and Price \cite{cow83}, and Beurling \cite{hor1991} theorems are the  examples of qualitative uncertainty principles. For a more detailed study   on the history of the uncertainty principle, and for many other generalizations and variations of the uncertainty principle, we refer   to the book of Havin and J\"oricke \cite{havin},  and  the excellent survey of Folland and Sitaram \cite{fol97}.\\

\noindent Considerable attention has been devoted to discovering generalizations to new contexts for  quantitative uncertainty principles. For example, quantitative uncertainty principles  were studied in \cite{majjoli16} for the generalized Fourier transform.   Over the  years, discovering new mathematical formulations of the uncertainty principle for the windowed Fourier transform have drawn significant attention among many researchers, see for example \cite{bona03,Wilc00,havin} and the references therein.  Uncertainty principles are further    extended and studied  in different settings by many authors. For instance, one can look \cite{gab19} for the Dunkl Gabor transform,  \cite{bra20} for two sided quaternion windowed Fourier transform,  \cite{bac16, ham17}  for the continuous Hankel transform,  \cite{wen20, gab14} for the windowed Hankel transform, \cite{hammami} for the Hankel--Gabor  transform,  and  \cite{jahn16} for the Heckman--Opdam transform.\\
 
\noindent  As a generalization of Euclidean uncertainty principles for the Fourier transform, Daher et al. \cite{daher12} have obtained some qualitative uncertainty principles for the Cherednik transform. Also,  qualitative uncertainty principles for the Opdam--Cherednik transform have been studied by  Mejjaoli in \cite{ majjoli14}  using classical uncertainty principles for the Fourier transform and composition properties of the Opdam–Cherednik transform. These results have been further extented to modulation spaces by the second author in \cite{por21}.  Moreover, the  Benedicks-type uncertainty principle for the Opdam--Cherednik transform has been investigated by Achak and Daher  in \cite{dah18}. Recently, the second author introduced  the  windowed Opdam--Cherednik transform and discussed the time-frequency analysis of localization operators associated  with the windowed Opdam--Cherednik transform in \cite{por2021}. However, up to our knowledge, quantitative  uncertainty principles have not been studied for the windowed Opdam–Cherednik transform. In this paper, we extend  some quantitative uncertainty principles for this  transform. \\

\noindent The motivation to prove these types of quantitative uncertainty principles for  the windowed Opdam--Cherednik transform in the framework  of the Opdam--Cherednik transform arises from the classical uncertainty principles for the windowed Fourier transform and the remarkable contribution of the Opdam--Cherednik transform  in harmonic analysis (see \cite{and15,opd95,opd00,sch08}).  Another important motivation to study the Jacobi--Cherednik operators arises from their relevance in the algebraic description of exactly solvable quantum many-body systems of Calogero--Moser--Sutherland type (see \cite{die00,hik96}) and they provide a useful tool in the study of special functions with root systems (see\cite{dun92,hec91}). These describe algebraically integrable systems in one dimension and have gained considerable interest in mathematical physics. Other motivation for the investigation of the Jacobi--Cherednik operator and the windowed Opdam--Cherednik transform is to generalize the previous subjects which are bound with the physics. For a more detailed discussion, one can  see \cite{mej}.\\

\noindent Uncertainty inequalities are some special class of uncertainty principles that give us information about how a function and its Fourier transform relate and have got considerable importance in signal analysis, physics, optics, and many other well-known areas \cite{bial85, dono89, gro01, hua18, ric14}. A well-known example of uncertainty inequality is the Heisenberg inequality. 
Considerable attention has been devoted to discovering generalizations to new contexts for uncertainty inequalities for various generalized transforms by several researchers. For instance, these uncertainty inequalities were obtained in \cite{ jahn16} for the Heckman--Opdam transform, and in \cite{mon22} for the Opdam--Cherednik transform. Some recent work on the windowed Opdam--Cherednik transform \cite{por2021, mond22} motivates us to study a few quantitative uncertainty principles for this transform. \\

\noindent  The main aim of this paper is to study a few  uncertainty principles related to the windowed Opdam--Cherednik transform. More preciously, we prove the uncertainty principle for orthonormal sequences, Donoho--Stark's uncertainty principle, Benedicks-type uncertainty principle, Heisenberg-type uncertainty principle and local uncertainty inequality for the windowed Opdam--Cherednik transform. We study the version of Donoho--Stark's uncertainty principle and show that the windowed Opdam--Cherednik transform cannot be concentrated in any small set. Also, we obtain an estimate for the size of the essential support of this transform under the condition that the windowed Opdam--Cherednik transform of a non-zero function is time-frequency concentrated on a measurable set. Then, we investigate the Benedicks-type uncertainty principle  and show that the windowed Opdam--Cherednik transform cannot be concentrated inside a set of measures arbitrarily small. Further, we study the Heisenberg-type uncertainty inequality  for a general magnitude and provide the result related to the $L^2(\R,A_{\alpha,\beta})$-mass of the windowed Opdam--Cherednik transform outside sets of finite measure. Finally, we obtain the Heisenberg-type uncertainty inequality   using the $k$-entropy and study the localization of the $k$-entropy of the windowed Opdam--Cherednik transform.\\

\noindent The presentation of this manuscript is divided into four sections apart from the introduction.   In Section \ref{sec2}, we present some preliminaries related to the  Opdam--Cherednik transform. In Section \ref{sec3}, we recall some essential properties related to the windowed Opdam--Cherednik transform.  Some different types of uncertainty principles associated with the windowed Opdam--Cherednik transform are provided in Section \ref{sec4}. In particular,  we prove the uncertainty principle for orthonormal sequences,  Donoho--Stark's uncertainty principle,  Benedicks-type uncertainty principle, Heisenberg-type uncertainty principle and local uncertainty inequality for the windowed Opdam--Cherednik transform. We conclude the paper with the Heisenberg-type uncertainty inequality  using the $k$-entropy of this transform.

\section{Harmonic analysis and the Opdam--Cherednik transform}\label{sec2}
In this section, we recall some necessary definitions and results from the harmonic analysis related to the   Opdam--Cherednik transform, which will be used frequently. A complete account of harmonic analysis related to this transform can be found in  \cite{and15, mej14, opd95, opd00, sch08, joh15, por21, por2021}. However, we will use the notations given in \cite{por21}.

Let $T_{\alpha, \beta}$ denote the Jacobi--Cherednik differential--difference operator (also called the Dunkl--Cherednik operator)
\[T_{\alpha, \beta} f(x)=\frac{d}{dx} f(x)+ \Big[ 
(2\alpha + 1) \coth x + (2\beta + 1) \tanh x \Big] \frac{f(x)-f(-x)}{2} - \rho f(-x), \]
where $\alpha, \beta$ are two parameters satisfying $\alpha \geq \beta \geq -\frac{1}{2}$ and $\alpha > -\frac{1}{2}$, and $\rho= \alpha + \beta + 1$. Let $\lambda \in \C$. The Opdam hypergeometric functions $G^{\alpha, \beta}_\lambda$ on $\R$ are eigenfunctions $T_{\alpha, \beta} G^{\alpha, \beta}_\lambda(x)=i \lambda  G^{\alpha, \beta}_\lambda(x)$ of $T_{\alpha, \beta}$ that are normalized such that $G^{\alpha, \beta}_\lambda(0)=1$. The eigenfunction $G^{\alpha, \beta}_\lambda$ is given by
\[G^{\alpha, \beta}_\lambda (x)= \varphi^{\alpha, \beta}_\lambda (x) - \frac{1}{\rho - i \lambda} \frac{d}{dx}\varphi^{\alpha, \beta}_\lambda (x)=\varphi^{\alpha, \beta}_\lambda (x)+ \frac{\rho+i \lambda}{4(\alpha+1)} \sinh 2x \; \varphi^{\alpha+1, \beta+1}_\lambda (x),  \]
where $\varphi^{\alpha, \beta}_\lambda (x)={}_2F_1 \left(\frac{\rho+i \lambda}{2}, \frac{\rho-i \lambda}{2} ; \alpha+1; -\sinh^2 x \right) $ is the classical Jacobi function.

For every $ \lambda \in \C$ and $x \in  \R$, the eigenfunction
$G^{\alpha, \beta}_\lambda$ satisfy
\[ |G^{\alpha, \beta}_\lambda(x)| \leq C \; e^{-\rho |x|} e^{|\text{Im} (\lambda)| |x|},\] 
where $C$ is a positive constant. Since $\rho > 0$, we have
\begin{equation}\label{eq1}
	|G^{\alpha, \beta}_\lambda(x)| \leq C \; e^{|\text{Im} (\lambda)| |x|}. 
\end{equation}
Let us denote by $C_c (\R)$ the space of continuous functions on $\R$ with compact support. The Opdam--Cherednik transform is the Fourier transform in the trigonometric Dunkl setting, and it is defined as follows.
\begin{Def}
	Let $\alpha \geq \beta \geq -\frac{1}{2}$ with $\alpha > -\frac{1}{2}$. The Opdam--Cherednik transform $\mathcal{H}_{\alpha, \beta} (f)$ of a function $f \in C_c(\R)$ is defined by
	\[ \H_{\alpha, \beta} (f) (\lambda)=\int_{\R} f(x)\; G^{\alpha, \beta}_\lambda(-x)\; A_{\alpha, \beta} (x) dx \quad \text{for all } \lambda \in \C, \] 
	where $A_{\alpha, \beta} (x)= (\sinh |x| )^{2 \alpha+1} (\cosh |x| )^{2 \beta+1}$. The inverse Opdam--Cherednik transform for a suitable function $g$ on $\R$ is given by
	\[ \H_{\alpha, \beta}^{-1} (g) (x)= \int_{\R} g(\lambda)\; G^{\alpha, \beta}_\lambda(x)\; d\sigma_{\alpha, \beta}(\lambda) \quad \text{for all } x \in \R, \]
	where $$d\sigma_{\alpha, \beta}(\lambda)= \left(1- \dfrac{\rho}{i \lambda} \right) \dfrac{d \lambda}{8 \pi |C_{\alpha, \beta}(\lambda)|^2}$$ and 
	$$C_{\alpha, \beta}(\lambda)= \dfrac{2^{\rho - i \lambda} \Gamma(\alpha+1) \Gamma(i \lambda)}{\Gamma \left(\frac{\rho + i \lambda}{2}\right)\; \Gamma\left(\frac{\alpha - \beta+1+i \lambda}{2}\right)}, \quad \lambda \in \C \setminus i \mathbb{N}.$$
\end{Def}

The Plancherel formula is given by 
\begin{equation}\label{eq03}
	\int_{\R} |f(x)|^2 A_{\alpha, \beta}(x) dx=\int_\R \H_{\alpha, \beta} (f)(\lambda) \overline{\H_{\alpha, \beta} ( \check{f})(-\lambda)} \; d \sigma_{\alpha, \beta} (\lambda),
\end{equation}
where $\check{f}(x):=f(-x)$.

Let $L^p(\R,A_{\alpha, \beta} )$ (resp. $L^p(\R, \sigma_{\alpha, \beta} )$), $p \in [1, \infty] $, denote the $L^p$-spaces corresponding to the measure $A_{\alpha, \beta}(x) dx$ (resp. $d | \sigma_{\alpha, \beta} |(x)$). 

The generalized translation operator associated with the Opdam--Cherednik transform is defined by \cite{ank12}
\begin{equation}\label{eq04}
	\tau_x^{(\alpha, \beta)} f(y)=\int_{\R} f(z) \; {d\mu}_{\;x, y}^{(\alpha, \beta)}(z),
\end{equation}
where ${d\mu}_{\;x, y}^{(\alpha, \beta)}$ is given by 
\begin{equation}\label{eq05}
	{d\mu}_{\;x, y}^{(\alpha, \beta)}(z)=
	\begin{cases} 
		\mathcal{K}_{\alpha, \beta}(x,y,z)\; A_{\alpha, \beta}(z)\; dz & \text{if} \;\; xy \neq 0 \\
		d \delta_x (z) & \text{if} \;\; y=0 \\
		d \delta_y (z) & \text{if} \;\; x=0
	\end{cases}  
\end{equation}
and
\begin{equation*}
	\begin{aligned}
		\mathcal{K}_{\alpha, \beta} {} & (x,y,z) 
		=M_{\alpha, \beta} |\sinh x \cdot \sinh y \cdot \sinh z  |^{-2 \alpha} \int_0^\pi g(x, y, z, \chi)_+^{\alpha-\beta-1} 
		\\
		& \times \left[ 1 - \sigma^\chi_{x,y,z}+ \sigma^\chi_{x,z,y} + \sigma^\chi_{z,y,x} + \frac{\rho}{\beta+\frac{1}{2}} \coth x \cdot \coth y \cdot \coth z (\sin \chi)^2 \right] \times (\sin \chi)^{2 \beta}\; d \chi
	\end{aligned}
\end{equation*}
if $x, y, z \in \R \setminus \{0\} $ satisfy the triangular inequality  $||x|-|y||<|z|<|x|+|y|$, and $\mathcal{K}_{\alpha, \beta} (x,y,z) =0$ otherwise. Here 
\[ \sigma^\chi_{x,y,z}=
\begin{cases} 
	\frac{\cosh x \cdot \cosh y - \cosh z \cdot \cos \chi}{\sinh x \cdot \sinh y} & \text{if} \;\; xy \neq 0 \\
	0 & \text{if} \;\; xy = 0 
\end{cases}
\quad \text{for} \; x, y, z \in \R, \; \chi \in [0,\pi], \]
$g(x, y, z, \chi) = 1- \cosh^2 x - \cosh^2 y - \cosh^2 z + 2 \cosh x \cdot \cosh y \cdot \cosh z \cdot \cos \chi$, and 
\[ g_+=
\begin{cases} 
	g &  \text{if} \;\;  g> 0 \\
	0 & \text{if} \;\;  g \leq 0.
\end{cases} \]
The kernel $\mathcal{K}_{\alpha, \beta} (x,y,z)$ satisfies the following symmetry properties:

$\mathcal{K}_{\alpha, \beta} (x,y,z)=\mathcal{K}_{\alpha, \beta} (y,x,z)$, $\mathcal{K}_{\alpha, \beta} (x,y,z)=\mathcal{K}_{\alpha, \beta} (-z,y,-x)$, $\mathcal{K}_{\alpha, \beta} (x,y,z)=\mathcal{K}_{\alpha, \beta} (x,-z,-y)$.

For every $x,y \in \R$, we have
\begin{equation}\label{eq06}
	\tau_x^{(\alpha, \beta)} f(y) = \tau_y^{(\alpha, \beta)} f(x),
\end{equation}
and 
\begin{equation}\label{eq07}
	\H_{\alpha, \beta}(\tau_x^{(\alpha, \beta)} f )(\lambda)=G_\lambda^{\alpha, \beta} (x) \; \H_{\alpha, \beta} (f)(\lambda),
\end{equation}
for $f \in C_c(\R)$.

If $f \in L^1(\R,A_{\alpha, \beta} )$, then
\begin{equation}\label{eq08}
	\int_\R \tau_x^{(\alpha, \beta)} f(y) \; A_{\alpha, \beta}(y) \; dy = \int_\R f(y) \; A_{\alpha, \beta}(y)\; dy. 
\end{equation}
For every $f \in L^p(\R,A_{\alpha, \beta} )$ and every $x \in \R$, the function $\tau_x^{(\alpha, \beta)} f$ belongs to the space $L^p(\R,A_{\alpha, \beta} )$ and 
\begin{equation}\label{eq09}
	\left\Vert \tau_x^{(\alpha, \beta)} f \right\Vert_{L^p(\R,A_{\alpha, \beta} )} \leq C_{\alpha, \beta} \left\Vert f \right\Vert_{L^p(\R,A_{\alpha, \beta} )}, 
\end{equation}
where $C_{\alpha, \beta}$ is a positive constant.

The convolution product associated with the Opdam--Cherednik transform is defined for two suitable functions $f$ and $g$  by \cite{ank12}
\[ (f *_{\alpha, \beta} g) (x)=\int_\R \tau_x^{(\alpha, \beta)} f(-y) \;g(y) \; A_{\alpha, \beta}(y) \; dy  \]
and
\begin{equation}\label{eq10}
	\H_{\alpha, \beta} (f *_{\alpha, \beta} g)= \H_{\alpha, \beta} (f) \; \H_{\alpha, \beta} (g).
\end{equation}

\section{The windowed Opdam--Cherednik transform}\label{sec3}
In this section, we  collect the  necessary definitions and results from the harmonic analysis related to the windowed Opdam–Cherednik transform. For a detailed discussion on this transform, we refer  to \cite{por2021}.

Let $g \in L^2(\R,A_{\alpha, \beta} )$ and $\xi \in \R$, the modulation operator of $g$ associated with the Opdam--Cherednik transform is defined by 
\begin{equation}\label{eq11}
	\M^{(\alpha, \beta)}_\xi g=\H^{-1}_{\alpha, \beta} \left( \sqrt{\tau_\xi^{(\alpha, \beta)} |\H_{\alpha, \beta}(g)|^2 } \right) .
\end{equation}
Then, for every $g \in L^2(\R,A_{\alpha, \beta} )$ and $\xi \in \R$, by using the Plancherel formula (\ref{eq03}) and 
the translation invariance of the Plancherel measure 
$d \sigma_{\alpha, \beta}$,  we obtain
\begin{equation}\label{eq12}
	\left\Vert \M^{(\alpha, \beta)}_\xi g \right\Vert_{L^2(\R,A_{\alpha, \beta} )}=\left\Vert g \right\Vert_{L^2(\R,A_{\alpha, \beta} )}.
\end{equation}
Now, for a non-zero window function $g \in L^2(\R,A_{\alpha, \beta} )$ and $(x, \xi) \in \R^2$, we consider the function $g_{x, \xi}^{(\alpha, \beta)}$ defined by 
\begin{equation}\label{eq13}
	g_{x, \xi}^{(\alpha, \beta)}= \tau_x^{(\alpha, \beta)} \M^{(\alpha, \beta)}_\xi g.
\end{equation}
For any function $f \in L^2(\R,A_{\alpha, \beta} )$, we define the windowed Opdam--Cherednik transform by
\begin{equation}\label{eq14}
	\W^{(\alpha, \beta)}_g(f)(x,\xi)=\int_\R f(s) \; \overline{g_{x, \xi}^{(\alpha, \beta)}(-s)} \;  A_{\alpha, \beta}(s) \; ds, \quad (x,\xi) \in \R^2,
\end{equation}
which can be also written in the form
\begin{equation}\label{eq15}
	\W^{(\alpha, \beta)}_g(f)(x,\xi)= \left(f *_{\alpha,\beta} \overline{ \M^{(\alpha, \beta)}_\xi g} \right)(x).
\end{equation}
We define the measure $A_{\alpha, \beta} \otimes \sigma_{\alpha, \beta}$ on $\R^2$ by 
\begin{equation}\label{eq16}
	d(A_{\alpha, \beta} \otimes \sigma_{\alpha, \beta})(x, \xi)= A_{\alpha, \beta}(x)  dx \; d\sigma_{\alpha, \beta}(\xi).
\end{equation}

The windowed Opdam--Cherednik transform satisfies the following properties. 
\begin{proposition}\cite{por2021}
	Let $g \in L^2(\R,A_{\alpha, \beta} )$ be a non-zero window function.  Then we have
	
	$(1)$ $($Plancherel's formula$)$ For every $f \in L^2(\R,A_{\alpha, \beta})$, 
	\begin{equation}\label{eq17}
	\left\Vert \W^{(\alpha, \beta)}_g(f) \right\Vert_{L^2(\R^2,  A_{\alpha, \beta}\otimes \sigma_{\alpha, \beta})} = \Vert f \Vert_{L^2(\R,A_{\alpha, \beta})} \; \Vert g \Vert_{L^2(\R,A_{\alpha, \beta})}.
	\end{equation}
	
	$(2)$ $($Orthogonality relation$)$  For every $f , h \in L^2(\R,A_{\alpha, \beta})$, we have
	\begin{equation}\label{eq18}
	\iint_{\R^2} \W^{(\alpha, \beta)}_g(f)(x, \xi) \; \overline{ \W^{(\alpha, \beta)}_g(h)(x, \xi)} \; d(A_{\alpha, \beta} \otimes \sigma_{\alpha, \beta})(x, \xi)  = \Vert g \Vert^2_{L^2(\R,A_{\alpha, \beta})} \int_\R f(s) \overline{h(s)} \;A_{\alpha, \beta}(s) \;ds.
	\end{equation}

	$(3)$ $($Reproducing kernel Hilbert space$)$ The space $\W^{(\alpha, \beta)}_g(L^2(\R,A_{\alpha, \beta}))$ is a reproducing kernel Hilbert space in $L^2(\R^2,A_{\alpha, \beta} \otimes \sigma_{\alpha, \beta} )$ with kernel function $K_g$ defined by 
	\begin{eqnarray}\label{eq19}
	K_g ((x', \xi');(x, \xi))
	& = & \frac{1}{\Vert g \Vert^2_{L^2(\R,A_{\alpha, \beta})}} \; \left( g_{x, \xi}^{(\alpha, \beta)} (- \; \cdot) *_{\alpha, \beta} \overline{  \M^{(\alpha, \beta)}_{\xi'} g} \right)(x') \nonumber \\ 
	& = & \frac{1}{\Vert g \Vert^2_{L^2(\R,A_{\alpha, \beta})}} \; \W^{(\alpha, \beta)}_g \left( g_{x, \xi}^{(\alpha, \beta)} (- \; \cdot) \right) (x', \xi').
	\end{eqnarray}
	Furthermore, the kernel is pointwise bounded
	\begin{equation}\label{eq20}
	\left| K_g ((x', \xi');(x, \xi)) \right| \leq 1, \quad \text{for all}\; (x, \xi);   (x', \xi') \in \R^2. 
	\end{equation}
\end{proposition}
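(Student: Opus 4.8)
The plan is to prove the orthogonality relation $(2)$ first, to deduce Plancherel's formula $(1)$ as the diagonal case $h=f$, and then to read off the reproducing‑kernel structure $(3)$ from $(1)$ and $(2)$. Throughout I would use the restatement $\W^{(\alpha,\beta)}_g(f)(x,\xi)=\langle f,\,g_{x,\xi}^{(\alpha,\beta)}(-\,\cdot)\rangle_{L^2(\R,A_{\alpha,\beta})}$ of \eqref{eq14}, together with the convolution form \eqref{eq15}, so that the window enters only through $\M^{(\alpha,\beta)}_\xi g$.

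For $(2)$, I would fix $\xi$ and regard $\W^{(\alpha,\beta)}_g(f)(\cdot,\xi)=f *_{\alpha,\beta}\overline{\M^{(\alpha,\beta)}_\xi g}$ as a function of $x$. First I would compute the inner $x$-pairing $\int_\R \W^{(\alpha,\beta)}_g(f)(x,\xi)\,\overline{\W^{(\alpha,\beta)}_g(h)(x,\xi)}\,A_{\alpha,\beta}(x)\,dx$ on the transform side: the convolution theorem \eqref{eq10} together with the (polarized) Plancherel formula \eqref{eq03} turns this into an integral over $\lambda$ in which the window appears only as $|\H_{\alpha,\beta}(\M^{(\alpha,\beta)}_\xi g)(\lambda)|^2$. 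Here I would invoke the defining property \eqref{eq11} of the modulation, namely $|\H_{\alpha,\beta}(\M^{(\alpha,\beta)}_\xi g)|^2=\tau_\xi^{(\alpha,\beta)}|\H_{\alpha,\beta}(g)|^2$, so the whole $\xi$-dependence is carried by one translated factor. Next I would integrate over $\xi$ against $d\sigma_{\alpha,\beta}$ and apply Fubini (justified since $f,h,g\in L^2$); the $\xi$-integral reduces to $\int_\R \big(\tau_\xi^{(\alpha,\beta)}|\H_{\alpha,\beta}(g)|^2\big)(\lambda)\,d\sigma_{\alpha,\beta}(\xi)$, which by the symmetry \eqref{eq06} and the invariance \eqref{eq08} of the generalized translation (now in the frequency variable, relative to the translation‑invariant measure $\sigma_{\alpha,\beta}$) is independent of $\lambda$ and equals $\|g\|^2_{L^2(\R,A_{\alpha,\beta})}$, by the same Plancherel‑and‑invariance computation underlying \eqref{eq12}. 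Collapsing the double integral and using \eqref{eq03} once more yields $\|g\|^2_{L^2}\,\langle f,h\rangle_{L^2}$, which is \eqref{eq18}; setting $h=f$ gives \eqref{eq17}.

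For $(3)$, Plancherel's formula shows that $\|g\|^{-1}\W^{(\alpha,\beta)}_g$ is an isometry, so $\W^{(\alpha,\beta)}_g(L^2)$ is a closed subspace of $L^2(\R^2,A_{\alpha,\beta}\otimes\sigma_{\alpha,\beta})$; since $(x,\xi)\mapsto\W^{(\alpha,\beta)}_g(f)(x,\xi)=\langle f,g_{x,\xi}^{(\alpha,\beta)}(-\,\cdot)\rangle$ is continuous and satisfies $|\W^{(\alpha,\beta)}_g(f)(x,\xi)|\le\|f\|_2\|g\|_2$, point evaluations on the image are bounded and the image is a reproducing‑kernel Hilbert space. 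I would obtain the kernel by applying the orthogonality relation \eqref{eq18} with second entry $h=g_{x',\xi'}^{(\alpha,\beta)}(-\,\cdot)$: because $\langle f,g_{x',\xi'}^{(\alpha,\beta)}(-\,\cdot)\rangle=\W^{(\alpha,\beta)}_g(f)(x',\xi')$, this gives the reproducing identity with $K_g$ as in \eqref{eq19}, the two displayed forms of $K_g$ being identified through \eqref{eq15}. The bound \eqref{eq20} then follows from Cauchy–Schwarz applied to the inner‑product form $K_g=\|g\|^{-2}\langle g_{x,\xi}^{(\alpha,\beta)}(-\,\cdot),\,g_{x',\xi'}^{(\alpha,\beta)}(-\,\cdot)\rangle$, together with the norm preservation $\|g_{x,\xi}^{(\alpha,\beta)}(-\,\cdot)\|_{L^2}=\|g\|_{L^2}$, which combines the evenness of $A_{\alpha,\beta}$, the isometry \eqref{eq12} of the modulation, and the $L^2$-invariance of the generalized translation.

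The main obstacle is the transform‑side computation inside $(2)$: one must push the pairing of two convolutions through the twisted Plancherel formula \eqref{eq03}, keeping careful track of the reflection $\check{\,\cdot\,}$ and of the complex conjugations so that the window genuinely appears as $|\H_{\alpha,\beta}(\M^{(\alpha,\beta)}_\xi g)|^2$, and then justify the interchange of the $\xi$- and $\lambda$-integrations. The delicate point in $(3)$ is the exact normalization $\|g_{x,\xi}^{(\alpha,\beta)}(-\,\cdot)\|_{L^2}=\|g\|_{L^2}$, on which the sharp constant $1$ in \eqref{eq20} rests.
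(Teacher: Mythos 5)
Since the paper states this proposition only with the citation \cite{por2021} and contains no proof of it, there is no internal argument to compare your proposal against, so I assess it on its own terms. Your skeleton --- prove the orthogonality relation \eqref{eq18} by a transform-side computation, get \eqref{eq17} as the diagonal case $h=f$, and read off the reproducing-kernel structure by taking $h=g^{(\alpha,\beta)}_{x,\xi}(-\,\cdot)$ in \eqref{eq18} --- is the standard route for windowed transforms built on a convolution structure, and the purely formal steps (deducing (1) from (2), deriving the reproducing identity, identifying the two forms of $K_g$ via \eqref{eq15}) are fine. But the crux of (2) is asserted rather than proved, and in this setting it is precisely where the difficulty sits. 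You claim that \eqref{eq10} together with a ``polarized'' form of \eqref{eq03} makes the window enter only as $|\H_{\alpha, \beta}(\M^{(\alpha, \beta)}_\xi g)(\lambda)|^2=\tau_\xi^{(\alpha,\beta)}|\H_{\alpha,\beta}(g)|^2(\lambda)$. The Plancherel formula \eqref{eq03} is twisted: it pairs $\H_{\alpha,\beta}(f)(\lambda)$ against $\overline{\H_{\alpha,\beta}(\check f)(-\lambda)}$; conjugation obeys $\H_{\alpha,\beta}(\overline{F})(\lambda)=\overline{\H_{\alpha,\beta}(F)(-\lambda)}$ because $\overline{G^{\alpha,\beta}_\lambda}=G^{\alpha,\beta}_{-\lambda}$ for real $\lambda$; the kernel $\mathcal{K}_{\alpha,\beta}$ is not reflection-symmetric (the sign of the $\coth x\coth y\coth z$ term flips under $(x,y,z)\mapsto(-x,-y,-z)$-type substitutions), so the reflection of $F*_{\alpha,\beta}G$ is not $\check F*_{\alpha,\beta}\check G$ in general; and $d\sigma_{\alpha,\beta}$ is a complex measure. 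Because of these reflections in $\lambda$, one must rule out that the computation produces mixed factors such as $\sqrt{\tau_\xi^{(\alpha,\beta)}|\H_{\alpha,\beta}(g)|^2(\lambda)}\,\sqrt{\tau_\xi^{(\alpha,\beta)}|\H_{\alpha,\beta}(g)|^2(-\lambda)}$, which differ from the claimed factor since $\tau_\xi^{(\alpha,\beta)}|\H_{\alpha,\beta}(g)|^2$ need not be even. Moreover, the final $\xi$-integration needs invariance of $\sigma_{\alpha,\beta}$ under the frequency-side generalized translation; \eqref{eq06} and \eqref{eq08} concern the measure $A_{\alpha,\beta}(x)\,dx$, so this also requires justification (the paper only alludes to it before \eqref{eq12}). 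Flagging all this as ``the main obstacle'' does not discharge it; without it, (1) and (2) are not established.

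The second gap is in your proof of the sharp bound \eqref{eq20}, which rests on the identity $\|g^{(\alpha,\beta)}_{x,\xi}(-\,\cdot)\|_{L^2(\R,A_{\alpha,\beta})}=\|g\|_{L^2(\R,A_{\alpha,\beta})}$, justified by ``the $L^2$-invariance of the generalized translation.'' No such invariance is available: the paper gives only \eqref{eq09}, boundedness with a constant $C_{\alpha,\beta}$ that need not equal $1$, and $\tau_x^{(\alpha,\beta)}$ is genuinely not an $L^2$-isometry --- by \eqref{eq07} it acts on the transform side as multiplication by $G^{\alpha,\beta}_\lambda(x)$, and for real $\lambda$ one has $|G^{\alpha,\beta}_\lambda(x)|^2=1+\tfrac{\rho}{\alpha+1}\,x+O(x^2)$ near $x=0$, so $|G^{\alpha,\beta}_\lambda(x)|>1$ for small $x>0$ and no unimodularity can be invoked. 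With the tools actually at hand, your Cauchy--Schwarz argument yields only $|K_g|\le C_{\alpha,\beta}^{2}$, and the same defect undermines the pointwise bound $|\W^{(\alpha,\beta)}_g(f)(x,\xi)|\le\|f\|_{L^2(\R,A_{\alpha,\beta})}\|g\|_{L^2(\R,A_{\alpha,\beta})}$ (i.e.\ \eqref{eq21}), which you also derive from that norm claim. The reproducing kernel Hilbert space property itself survives, since bounded point evaluations with constant $C_{\alpha,\beta}$ suffice once (1)--(2) are known, but the constant $1$ in \eqref{eq20} and \eqref{eq21} must come from the specific structure $g^{(\alpha,\beta)}_{x,\xi}=\tau_x^{(\alpha,\beta)}\H^{-1}_{\alpha,\beta}\big(\sqrt{\tau_\xi^{(\alpha,\beta)}|\H_{\alpha,\beta}(g)|^2}\,\big)$, not from a generic isometry of $\tau_x^{(\alpha,\beta)}$ that fails in the Jacobi--Cherednik setting.
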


The windowed Opdam--Cherednik transform also satisfies the following boundedness properties.
\begin{proposition}\cite{por2021}
	Let $g \in L^2(\R, A_{\alpha, \beta} )$ be a non-zero window function. Then for every $f \in L^2(\R,A_{\alpha, \beta} )$, we have  
	
	$(1)$   
	\begin{equation}\label{eq21}
	\left\Vert \W^{(\alpha, \beta)}_g(f) \right\Vert_{L^\infty(\R^2,A_{\alpha, \beta} \otimes \sigma_{\alpha, \beta})} \leq \Vert f \Vert_{L^2(\R,A_{\alpha, \beta})} \; \Vert g \Vert_{L^2(\R,A_{\alpha, \beta})}.
	\end{equation}
	
	$(2)$ The function $\W^{(\alpha, \beta)}_g(f) \in L^p(\R^2,A_{\alpha, \beta} \otimes \sigma_{\alpha, \beta})$, $p \in [2, \infty)$ and 
	\begin{equation}\label{eq22}
	\left\Vert \W^{(\alpha, \beta)}_g(f) \right\Vert_{L^p(\R^2,A_{\alpha, \beta} \otimes \sigma_{\alpha, \beta})} \leq \Vert f \Vert_{L^2(\R,A_{\alpha, \beta})} \; \Vert g \Vert_{L^2(\R,A_{\alpha, \beta})}.
	\end{equation}  
\end{proposition}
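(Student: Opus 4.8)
The plan is to prove part $(1)$ by a direct Cauchy--Schwarz estimate on the defining integral (\ref{eq14}), and then to deduce part $(2)$ from part $(1)$ together with the Plancherel identity (\ref{eq17}) via the elementary convexity bound that interpolates between $L^2$ and $L^\infty$.

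First I would record that, by (\ref{eq14}), for each fixed $(x,\xi)\in\R^2$ the value $\W^{(\alpha,\beta)}_g(f)(x,\xi)$ is precisely the $L^2(\R,A_{\alpha,\beta})$ inner product of $f$ against $g_{x,\xi}^{(\alpha,\beta)}(-\,\cdot)$. Hence Cauchy--Schwarz gives
\[ \bigl|\W^{(\alpha,\beta)}_g(f)(x,\xi)\bigr| \le \|f\|_{L^2(\R,A_{\alpha,\beta})}\,\bigl\|g_{x,\xi}^{(\alpha,\beta)}(-\,\cdot)\bigr\|_{L^2(\R,A_{\alpha,\beta})}. \]
Since $A_{\alpha,\beta}$ is even, the reflection $s\mapsto -s$ preserves the weighted $L^2$ norm, so the last factor equals $\|g_{x,\xi}^{(\alpha,\beta)}\|_{L^2(\R,A_{\alpha,\beta})}$. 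Taking the supremum over $(x,\xi)$ then reduces $(1)$ to the pointwise norm control $\|g_{x,\xi}^{(\alpha,\beta)}\|_{L^2(\R,A_{\alpha,\beta})}\le\|g\|_{L^2(\R,A_{\alpha,\beta})}$.

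The delicate point is exactly this norm bound, and I expect it to be the main obstacle. Since $g_{x,\xi}^{(\alpha,\beta)}=\tau_x^{(\alpha,\beta)}\M^{(\alpha,\beta)}_\xi g$, combining the modulation isometry (\ref{eq12}) with the translation estimate (\ref{eq09}) at $p=2$ only produces $\|g_{x,\xi}^{(\alpha,\beta)}\|_{L^2}\le C_{\alpha,\beta}\|g\|_{L^2}$, which carries the unwanted constant $C_{\alpha,\beta}$ and is not sharp enough, because $\tau_x^{(\alpha,\beta)}$ is bounded but not isometric. To recover the clean constant $1$ I would instead read the sharp bound off the reproducing-kernel estimate (\ref{eq20}): evaluating the kernel in (\ref{eq19}) on the diagonal $(x',\xi')=(x,\xi)$ gives $K_g((x,\xi);(x,\xi))=\|g\|^{-2}_{L^2(\R,A_{\alpha,\beta})}\,\|g_{x,\xi}^{(\alpha,\beta)}\|^2_{L^2(\R,A_{\alpha,\beta})}$, and (\ref{eq20}) forces this to be $\le 1$, that is, $\|g_{x,\xi}^{(\alpha,\beta)}\|_{L^2}\le\|g\|_{L^2}$. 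This closes $(1)$.

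For $(2)$, I would use that $F:=\W^{(\alpha,\beta)}_g(f)$ lies in both $L^2$ and $L^\infty$ of $(\R^2,A_{\alpha,\beta}\otimes\sigma_{\alpha,\beta})$, with $\|F\|_{L^2}=\|f\|_{L^2(\R,A_{\alpha,\beta})}\|g\|_{L^2(\R,A_{\alpha,\beta})}$ from (\ref{eq17}) and $\|F\|_{L^\infty}\le\|f\|_{L^2(\R,A_{\alpha,\beta})}\|g\|_{L^2(\R,A_{\alpha,\beta})}$ from $(1)$. Splitting $|F|^p=|F|^{p-2}|F|^2$ for $p\in[2,\infty)$ gives, against the positive total-variation measure underlying $A_{\alpha,\beta}\otimes\sigma_{\alpha,\beta}$,
\[ \|F\|^p_{L^p}\le\|F\|^{p-2}_{L^\infty}\,\|F\|^2_{L^2}\le\bigl(\|f\|_{L^2(\R,A_{\alpha,\beta})}\,\|g\|_{L^2(\R,A_{\alpha,\beta})}\bigr)^{p}, \]
which simultaneously establishes $F\in L^p$ and, after taking $p$-th roots, the stated inequality. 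The only bookkeeping point is that $\sigma_{\alpha,\beta}$ is a signed/complex measure, so all norms and this convexity step are understood with respect to $d|\sigma_{\alpha,\beta}|$; with that convention the step is routine and the genuine work stays confined to the sharp pointwise norm bound used in $(1)$.
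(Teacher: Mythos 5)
Your proposal addresses a proposition that this paper does not actually prove: it is quoted from \cite{por2021}, so there is no internal proof to compare against, and your argument must be judged on its own terms against the facts the paper does state. On those terms it is correct. Cauchy--Schwarz applied to (\ref{eq14}), the evenness of $A_{\alpha,\beta}$ (so that reflection preserves the weighted $L^2$ norm), and the diagonal evaluation of (\ref{eq19}), namely $K_g((x,\xi);(x,\xi))=\Vert g\Vert^{-2}_{L^2(\R,A_{\alpha,\beta})}\,\Vert g_{x,\xi}^{(\alpha,\beta)}\Vert^2_{L^2(\R,A_{\alpha,\beta})}$, combined with the pointwise bound (\ref{eq20}), do give the sharp estimate $\Vert g_{x,\xi}^{(\alpha,\beta)}\Vert_{L^2(\R,A_{\alpha,\beta})}\le\Vert g\Vert_{L^2(\R,A_{\alpha,\beta})}$; and you correctly identify that (\ref{eq09}) together with (\ref{eq12}) would only yield the constant $C_{\alpha,\beta}$. (Note in passing that (\ref{eq09}) and (\ref{eq12}) are still needed to know $g_{x,\xi}^{(\alpha,\beta)}\in L^2(\R,A_{\alpha,\beta})$, so that the kernel formula may legitimately be evaluated on the diagonal.) The one caveat you should flag more strongly is logical order: the kernel bound (\ref{eq20}) is itself the kind of statement that is normally \emph{derived from} the present proposition --- indeed $|K_g((x',\xi');(x,\xi))|\le\Vert g\Vert^{-2}\Vert g_{x,\xi}^{(\alpha,\beta)}\Vert_{L^2}\Vert g_{x',\xi'}^{(\alpha,\beta)}\Vert_{L^2}$ is exactly the Cauchy--Schwarz step you perform, so in the source \cite{por2021} the norm bound $\Vert g_{x,\xi}^{(\alpha,\beta)}\Vert_{L^2}\le\Vert g\Vert_{L^2}$ must be established first, by a direct computation on the transform side using (\ref{eq07}), (\ref{eq11}) and the Plancherel theorem for $\H_{\alpha,\beta}$. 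Your route is therefore admissible inside this paper, where both cited propositions are taken as known, but it would be circular as a self-contained proof; a from-scratch argument has to prove the norm bound for $g_{x,\xi}^{(\alpha,\beta)}$ independently rather than read it off (\ref{eq20}). Part (2) is routine and correct: the splitting $|F|^p=|F|^{p-2}|F|^2$ with $\Vert F\Vert_{L^\infty}$ controlled by part (1) and $\Vert F\Vert_{L^2}$ given by (\ref{eq17}), all norms taken with respect to $d(A_{\alpha,\beta}\otimes|\sigma_{\alpha,\beta}|)$, yields $\Vert F\Vert_{L^p}\le\Vert f\Vert_{L^2(\R,A_{\alpha,\beta})}\Vert g\Vert_{L^2(\R,A_{\alpha,\beta})}$ for all $p\in[2,\infty)$, with equality at $p=2$.
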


\section{Uncertainty principles for the windowed Opdam--Cherednik transform}\label{sec4}
In this section, we obtain various uncertainty principles for the windowed Opdam--Cherednik transform. 
\subsection{Uncertainty principle for orthonormal sequences}
In this subsection, we establish  the uncertainty principle for orthonormal sequences associated with  the windowed Opdam--Cherednik transform.  First, we consider the following orthogonal projections:
\begin{enumerate}
	\item  Let $P_{g}$ be the orthogonal projection from $L^2(\R^2,A_{\alpha, \beta} \otimes \sigma_{\alpha, \beta})$ onto $\W_{g}^{(\alpha, \beta)}\left(L^{2}\left(\R,A_{\alpha, \beta} \right)\right) $ and   $\operatorname{Im}P_g$  denotes the    range of  $P_{g}$.
	\item Let $P_{\Sigma}$ be the orthogonal projection on $L^2(\R^2,A_{\alpha, \beta} \otimes \sigma_{\alpha, \beta})$ defined by
	\begin{align}\label{eq25}
	P_{\Sigma} F=\chi_{\Sigma} F, \quad F \in L^2(\R^2,A_{\alpha, \beta} \otimes \sigma_{\alpha, \beta}),
	\end{align}
	where $\Sigma \subset \R^2$ and $ \operatorname{Im}P_\Sigma $ is the  range of  $P_{\Sigma}$.
\end{enumerate}
Also, we define $$\left\|P_{\Sigma} P_{g}\right\|=\sup \left\{\left\|P_{\Sigma} P_{g}(F)\right\|_{L^2(\R^2,A_{\alpha, \beta} \otimes \sigma_{\alpha, \beta})}: F \in L^2(\R^2,A_{\alpha, \beta} \otimes \sigma_{\alpha, \beta}), \|F\|_{L^2(\R^2,A_{\alpha, \beta} \otimes \sigma_{\alpha, \beta})}=1\right\}.$$ We first need the following result.
\begin{theorem}\label{eq26}
	Let $g \in L^2(\R,A_{\alpha, \beta})$ be a non-zero window function. Then  for any  $\Sigma  \subset \R^2$ of  finite measure $A_{\alpha, \beta} \otimes \sigma_{\alpha, \beta}(\Sigma)<\infty$,  the operator $P_{\Sigma} P_{g}$ is a Hilbert--Schmidt  operator. Moreover,   we have the following estimation
	$$
	\left\|P_{\Sigma} P_{g}\right\|^{2} \leq A_{\alpha, \beta} \otimes \sigma_{\alpha, \beta}(\Sigma).
	$$
\end{theorem}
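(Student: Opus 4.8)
The plan is to realize $P_{\Sigma} P_{g}$ as an integral operator and to compute its Hilbert--Schmidt norm directly; this single computation will simultaneously show that the operator is Hilbert--Schmidt and yield the stated norm estimate. First I would exploit the reproducing kernel structure recorded in \eqref{eq19}--\eqref{eq20}. Since $\W^{(\alpha,\beta)}_g(L^2(\R,A_{\alpha,\beta}))$ is a reproducing kernel Hilbert space inside $L^2(\R^2, A_{\alpha,\beta}\otimes\sigma_{\alpha,\beta})$ with kernel $K_g$, the orthogonal projection $P_g$ onto it is precisely the integral operator
$$P_g F(x',\xi') = \iint_{\R^2} K_g((x',\xi');(x,\xi))\, F(x,\xi)\, d(A_{\alpha,\beta}\otimes\sigma_{\alpha,\beta})(x,\xi),$$
where one uses the Hermitian symmetry $\overline{K_g((x,\xi);(x',\xi'))} = K_g((x',\xi');(x,\xi))$ of the reproducing kernel. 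Composing with $P_\Sigma = \chi_\Sigma\,\cdot$ then exhibits $P_{\Sigma} P_{g}$ as the integral operator whose kernel is $\chi_\Sigma(x',\xi')\,K_g((x',\xi');(x,\xi))$.

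Next I would compute the Hilbert--Schmidt norm as the double integral of the modulus squared of this kernel. The crucial simplification is the standard RKHS identity that the $L^2$-norm squared of the kernel in one of its variables equals its value on the diagonal, namely
$$\iint_{\R^2} |K_g((x',\xi');(x,\xi))|^2 \, d(A_{\alpha,\beta}\otimes\sigma_{\alpha,\beta})(x,\xi) = K_g((x',\xi');(x',\xi')).$$
This holds because $K_g((x',\xi');\,\cdot\,)$ belongs to the reproducing kernel Hilbert space and the RKHS inner product agrees with the ambient $L^2$ inner product on that closed subspace. By the pointwise bound \eqref{eq20}, the diagonal value satisfies $K_g((x',\xi');(x',\xi')) \leq 1$. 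Therefore, applying Fubini's theorem (legitimate since the relevant integral is nonnegative),
$$\|P_{\Sigma} P_{g}\|_{\mathrm{HS}}^2 = \iint_{\R^2} \chi_\Sigma(x',\xi') \left(\iint_{\R^2} |K_g((x',\xi');(x,\xi))|^2 \, d(A_{\alpha,\beta}\otimes\sigma_{\alpha,\beta})(x,\xi)\right) d(A_{\alpha,\beta}\otimes\sigma_{\alpha,\beta})(x',\xi') \leq A_{\alpha,\beta}\otimes\sigma_{\alpha,\beta}(\Sigma),$$
which is finite by hypothesis. Hence $P_{\Sigma} P_{g}$ is a Hilbert--Schmidt operator. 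Finally, since the operator norm is always dominated by the Hilbert--Schmidt norm, one concludes $\|P_{\Sigma} P_{g}\|^2 \leq \|P_{\Sigma} P_{g}\|_{\mathrm{HS}}^2 \leq A_{\alpha,\beta}\otimes\sigma_{\alpha,\beta}(\Sigma)$, as claimed.

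The main obstacle I anticipate is the careful justification of the two RKHS facts invoked above: that $P_g$ coincides with integration against $K_g$ (equivalently, that the reproducing kernel of the range equals the Schwartz kernel of the projection), and the diagonal identity for $\iint|K_g|^2$. Both are classical, but they rest on the Hermitian symmetry of $K_g$ and on the compatibility of the RKHS norm with the $L^2(\R^2,A_{\alpha,\beta}\otimes\sigma_{\alpha,\beta})$ norm, which should be extracted from the orthogonality relation \eqref{eq18} and the defining formula \eqref{eq19}. Once these are in place, the remainder is a routine application of Fubini's theorem and the comparison $\|\cdot\| \le \|\cdot\|_{\mathrm{HS}}$. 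I also note that the operator-norm bound alone can be obtained more directly: for $F$ in the range of $P_g$, the reproducing property together with Cauchy--Schwarz gives $|P_gF(x',\xi')|^2 \le \|F\|^2\,K_g((x',\xi');(x',\xi')) \le \|F\|^2$, so that $\|P_{\Sigma}P_g F\|^2 \le A_{\alpha,\beta}\otimes\sigma_{\alpha,\beta}(\Sigma)\,\|F\|^2$; the Hilbert--Schmidt computation is preferable here since it proves both assertions at once.
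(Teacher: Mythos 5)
Your proposal is correct and follows essentially the same route as the paper: realize $P_{\Sigma}P_{g}$ as an integral operator with kernel $\chi_{\Sigma}\cdot K_g$, bound its Hilbert--Schmidt norm by $A_{\alpha,\beta}\otimes\sigma_{\alpha,\beta}(\Sigma)$ via Fubini's theorem, and conclude with $\left\|P_{\Sigma}P_{g}\right\|\leq\left\|P_{\Sigma}P_{g}\right\|_{HS}$. The only (immaterial) difference lies in how the inner integral of $|K_g|^2$ is evaluated: you invoke the abstract RKHS identity that this integral equals the diagonal value $K_g(w;w)$, bounded by $1$ via \eqref{eq20}, whereas the paper substitutes the explicit formula \eqref{eq19} and applies Plancherel's formula \eqref{eq17} --- two formulations of the same computation.
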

\begin{proof}
Since $P_g$ is a projection onto a reproducing karnel Hilbert space, 	for any  function $F \in L^2(\R^2,A_{\alpha, \beta} \otimes \sigma_{\alpha, \beta})$, the orthogonal
	projection $P_{g}$ can be expressed as
	$$
	P_{g}(F)(x, \xi)=\iint_{\R^2} F(x', \xi') K_g ((x', \xi');(x, \xi))\; d(A_{\alpha, \beta} \otimes \sigma_{\alpha, \beta})(x', \xi'),
	$$
	where $K_g ((x', \xi');(x, \xi))$ is  given  by (\ref{eq19}). Using the relation   (\ref{eq25}), we obtain
	$$
	P_{\Sigma} P_{g}(F)(x, \xi)=\iint_{\R^2}\chi_{\Sigma}( x, \xi) F(x', \xi') K_g ((x', \xi');(x, \xi))\; d(A_{\alpha, \beta} \otimes \sigma_{\alpha, \beta})(x', \xi').
	$$
	This shows that the operator $P_{\Sigma} P_{g}$  is an integral operator with  kernel $K((x', \xi');(x, \xi))=\chi_{\Sigma}( x, \xi)   K_g ((x', \xi');(x, \xi))$.  
	Using the relation \eqref{eq19},  Plancherel's formula  (\ref{eq17}),  and Fubini's theorem, we have
	\begin{align*}
	\left\|P_{\Sigma} P_{g}\right\|_{H S}^{2}&=\iint_{\R^2}\iint_{\R^2}\left|K((x', \xi');(x, \xi))\right|^2 \; d(A_{\alpha, \beta} \otimes \sigma_{\alpha, \beta})(x', \xi') \; d(A_{\alpha, \beta} \otimes \sigma_{\alpha, \beta})(x, \xi)
	\\&=\iint_{\R^2}\iint_{\R^2}\left|\chi_{\Sigma}(x, \xi)\right|^{2}\left| K_g ((x', \xi');(x, \xi))\right|^2  d(A_{\alpha, \beta} \otimes \sigma_{\alpha, \beta})(x', \xi')  d(A_{\alpha, \beta} \otimes \sigma_{\alpha, \beta})(x, \xi)\\
	&=  \frac{1}{\Vert g \Vert^4_{L^2(\R,A_{\alpha, \beta})}} 
	\iint_{\Sigma}\left(\iint_{\R^2}  \left|\W^{(\alpha, \beta)}_g \left( g_{x, \xi}^{(\alpha, \beta)} (- \; \cdot) \right) (x', \xi') \right|^2 d(A_{\alpha, \beta} \otimes \sigma_{\alpha, \beta})(x', \xi')  \right)\\&\qquad\qquad\quad\qquad \times d(A_{\alpha, \beta} \otimes \sigma_{\alpha, \beta})(x, \xi)\\
	&\leq \frac{\Vert g \Vert^4_{L^2(\R,A_{\alpha, \beta})}}{\Vert g \Vert^4_{L^2(\R,A_{\alpha, \beta})}} A_{\alpha, \beta} \otimes \sigma_{\alpha, \beta}(\Sigma)=A_{\alpha, \beta} \otimes \sigma_{\alpha, \beta}(\Sigma).
	\end{align*}
	Thus, the operator $P_{\Sigma} P_{g}$ is a   Hilbert--Schmidt operator. Now, the  proof  follows from the fact that $\left\|P_{\Sigma} P_{g}\right\|\leq \left\|P_{\Sigma} P_{g}\right\|_{H S}$.
\end{proof}
In the following,  we obtain the uncertainty principle for orthonormal sequences associated with the windowed Opdam--Cherednik transform.
\begin{theorem}
	Let $g \in L^2(\R,A_{\alpha, \beta})$ be a non-zero window function and  $\left\{ \phi_{n}\right\}_{n \in \mathbb{N}}$ be an orthonormal sequence in $L^2(\R,A_{\alpha, \beta})$. Then for any  $\Sigma  \subset \R^2$ of  finite measure $A_{\alpha, \beta} \otimes \sigma_{\alpha, \beta}(\Sigma)<\infty,$ we have
	$$
	\sum_{n=1}^{N}\left(1-\left\|\chi_{\Sigma^{c}} \W_{g}^{(\alpha, \beta)}\left(  \phi_{n}\right)\right\|_{L^2(\R^2, A_{\alpha, \beta} \otimes \sigma_{\alpha, \beta})}\right) \leq A_{\alpha, \beta} \otimes \sigma_{\alpha, \beta}(\Sigma),
	$$  for every $N\in \mathbb{N}.$
\end{theorem}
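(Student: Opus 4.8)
The plan is to deduce the inequality from the Hilbert--Schmidt estimate of Theorem \ref{eq26}, applied to the orthonormal system in $L^2(\R^2,A_{\alpha,\beta}\otimes\sigma_{\alpha,\beta})$ obtained by transforming the $\phi_n$. I normalize the window so that $\|g\|_{L^2(\R,A_{\alpha,\beta})}=1$ (this is what makes the constant $1$ on the left and the bare measure on the right match; the general case is a rescaling). First I would check that $\{\W^{(\alpha,\beta)}_g(\phi_n)\}_{n}$ is orthonormal: by the orthogonality relation \eqref{eq18},
\[
\iint_{\R^2} \W^{(\alpha,\beta)}_g(\phi_n)\,\overline{\W^{(\alpha,\beta)}_g(\phi_m)}\; d(A_{\alpha,\beta}\otimes\sigma_{\alpha,\beta}) = \|g\|^2_{L^2(\R,A_{\alpha,\beta})}\int_\R \phi_n\,\overline{\phi_m}\,A_{\alpha,\beta}(s)\,ds=\delta_{nm}.
\]
Moreover each $\W^{(\alpha,\beta)}_g(\phi_n)$ lies in $\operatorname{Im}P_g=\W^{(\alpha,\beta)}_g(L^2(\R,A_{\alpha,\beta}))$, so $P_g\W^{(\alpha,\beta)}_g(\phi_n)=\W^{(\alpha,\beta)}_g(\phi_n)$, and therefore $P_\Sigma P_g\,\W^{(\alpha,\beta)}_g(\phi_n)=\chi_\Sigma\,\W^{(\alpha,\beta)}_g(\phi_n)$ by \eqref{eq25}.

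Next I would invoke the general fact that, for a Hilbert--Schmidt operator $T$ and any orthonormal sequence $\{e_n\}$, one has $\sum_n\|Te_n\|^2\le\|T\|_{HS}^2$ (extend $\{e_n\}$ to an orthonormal basis and drop the nonnegative remaining terms). Taking $T=P_\Sigma P_g$ and $e_n=\W^{(\alpha,\beta)}_g(\phi_n)$, and using Theorem \ref{eq26}, this gives
\[
\sum_{n=1}^N \left\|\chi_\Sigma\,\W^{(\alpha,\beta)}_g(\phi_n)\right\|^2_{L^2(\R^2,A_{\alpha,\beta}\otimes\sigma_{\alpha,\beta})} \le \left\|P_\Sigma P_g\right\|_{HS}^2 \le A_{\alpha,\beta}\otimes\sigma_{\alpha,\beta}(\Sigma).
\]

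Finally I would pass from the squared masses to the linear quantity appearing in the statement by an elementary pointwise estimate. Writing $b_n=\left\|\chi_{\Sigma^c}\W^{(\alpha,\beta)}_g(\phi_n)\right\|$, the splitting $\chi_\Sigma+\chi_{\Sigma^c}=1$ together with $\left\|\W^{(\alpha,\beta)}_g(\phi_n)\right\|=1$ yields $\left\|\chi_\Sigma\W^{(\alpha,\beta)}_g(\phi_n)\right\|^2+b_n^2=1$; since $0\le b_n\le 1$ we have $b_n^2\le b_n$, whence $1-b_n\le 1-b_n^2=\left\|\chi_\Sigma\W^{(\alpha,\beta)}_g(\phi_n)\right\|^2$. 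Summing over $n$ and combining with the preceding display gives the claim. The proof is essentially routine once Theorem \ref{eq26} is available; the only genuinely delicate points are the orthonormality of the transformed system (supplied by \eqref{eq18}) and this last elementary inequality, which is precisely the device that converts the natural $L^2$-concentration bound into the stated linear form. I expect the normalization of $g$ to be the one place demanding care: the argument hinges on $\left\|\W^{(\alpha,\beta)}_g(\phi_n)\right\|=1$, which is where the assumption $\|g\|_{L^2(\R,A_{\alpha,\beta})}=1$ enters.
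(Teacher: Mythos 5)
Your proof is correct and follows essentially the same route as the paper's: Theorem \ref{eq26}, orthonormality of $\{\W_{g}^{(\alpha,\beta)}(\phi_n)\}_n$ via the orthogonality relation \eqref{eq18}, the Hilbert--Schmidt bound summed over the orthonormal sequence (which the paper phrases as $\operatorname{tr}(P_g P_\Sigma P_g)=\|P_\Sigma P_g\|_{HS}^2\le A_{\alpha,\beta}\otimes\sigma_{\alpha,\beta}(\Sigma)$, identical to your $\sum_n\|P_\Sigma P_g e_n\|^2\le\|P_\Sigma P_g\|_{HS}^2$), and the elementary step $b_n^2\le b_n$ for $b_n\in[0,1]$ (which the paper carries out as a Cauchy--Schwarz estimate, amounting to the same inequality). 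Your explicit normalization $\|g\|_{L^2(\R,A_{\alpha,\beta})}=1$ is indeed required for the transformed sequence to be orthonormal and for the stated inequality to hold at all; the paper assumes this tacitly when invoking \eqref{eq18}, so flagging it is a point of care, not a deviation.
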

\begin{proof}
	Let $\left\{ e_{n}\right\}_{n \in \mathbb{N}}$ be an orthonormal basis for  $L^{2}\left(\R^2, A_{\alpha, \beta} \otimes \sigma_{\alpha, \beta}\right)$. Since $P_{\Sigma} P_{g}$ is  a  Hilbert--Schmidt operator, from Theorem \ref{eq26}, we get 
	$$
	tr \left(P_{g} P_{\Sigma} P_{g}\right)=\sum_{n \in \mathbb{N}}\left\langle P_{g} P_{\Sigma} P_{g} e_{n}, e_{n}\right\rangle_{L^2(\R^2, A_{\alpha, \beta} \otimes \sigma_{\alpha, \beta})}=\left\|P_{\Sigma} P_{g}\right\|_{H S}^{2} \leq A_{\alpha, \beta} \otimes \sigma_{\alpha, \beta}(\Sigma),
	$$
	where $tr \left(P_{g} P_{\Sigma} P_{g}\right)$ denotes the trace of the operator $P_{g} P_{\Sigma} P_{g}$  and 
	$\langle\cdot, \cdot\rangle_{L^2(\R^2,A_{\alpha, \beta} \otimes \sigma_{\alpha, \beta})}$ denotes the  inner product of  $L^2(\R^2,A_{\alpha, \beta} \otimes \sigma_{\alpha, \beta})$.
	Since $\left\{ \phi_{n}\right\}_{n \in \mathbb{N}}$ be an orthonormal sequence in $L^{2}\left(\R,  A_{\alpha, \beta}\right)$,  from the  orthogonality relation  (\ref{eq18}),  we obtain that   $\{ \W_{g}^{(\alpha, \beta)}\left(  \phi_{n}\right)\}_{n \in \mathbb{N}}$ is also an orthonormal sequence in $L^2(\R^2,A_{\alpha, \beta} \otimes \sigma_{\alpha, \beta})$. Therefore
	\begin{align*}
	&\sum_{n=1}^{N}\left\langle P_{\Sigma} \W_{g}^{(\alpha, \beta)}\left(  \phi_{n}\right), \W_{g}^{(\alpha, \beta)}\left(  \phi_{n}\right)\right\rangle_{L^2(\R^2, A_{\alpha, \beta} \otimes \sigma_{\alpha, \beta})} \\
	&=\sum_{n=1}^{N}\left\langle P_{g} P_{\Sigma} P_{g} \W_{g}^{(\alpha, \beta)}\left(  \phi_{n}\right), \W_{g}^{(\alpha, \beta)}\left(  \phi_{n}\right)\right\rangle_{L^2(\R^2, A_{\alpha, \beta} \otimes \sigma_{\alpha, \beta})} \\
	& \leq tr\left(P_{g} P_{\Sigma} P_{g}\right).
	\end{align*}
	Thus, we have
	$$
	\sum_{n=1}^{N}\left\langle P_{\Sigma} \W_{g}^{(\alpha, \beta)}\left(  \phi_{n}\right), \W_{g}^{(\alpha, \beta)}\left(  \phi_{n}\right)\right\rangle_{L^2(\R^2, A_{\alpha, \beta} \otimes \sigma_{\alpha, \beta})}  \leq A_{\alpha, \beta} \otimes \sigma_{\alpha, \beta}(\Sigma).
	$$
	Moreover,  for any $n$ with $1 \leq n \leq N$, using  the Cauchy--Schwarz inequality, we get 
	$$
	\begin{aligned}
&	\left\langle P_{\Sigma} \W_{g}^{(\alpha, \beta)}\left(  \phi_{n}\right), \W_{g}^{(\alpha, \beta)}\left(  \phi_{n}\right)\right\rangle_{L^2(\R^2, A_{\alpha, \beta} \otimes \sigma_{\alpha, \beta})} \\&=1- \left\langle P_{\Sigma^c} \W_{g}^{(\alpha, \beta)}\left(  \phi_{n}\right), \W_{g}^{(\alpha, \beta)}\left(  \phi_{n}\right)\right\rangle_{L^2(\R^2, A_{\alpha, \beta} \otimes \sigma_{\alpha, \beta})} \\
	&\geq 1-\left\|\chi_{\Sigma^c} \W_{g}^{(\alpha, \beta)}\left(  \phi_{n}\right)\right \|_{L^2(\R^2, A_{\alpha, \beta} \otimes \sigma_{\alpha, \beta})}.
	\end{aligned}
	$$
Therefore 
	\begin{align*}
	\sum_{n=1}^{N}\left(1-\left\|\chi_{\Sigma^c} \W_{g}^{(\alpha, \beta)}\left(  \phi_{n}\right)\right \|_{L^2(\R^2, A_{\alpha, \beta} \otimes \sigma_{\alpha, \beta})}  \right) &\leq	\sum_{n=1}^{N}\left\langle P_{\Sigma} \W_{g}^{(\alpha, \beta)}\left(  \phi_{n}\right), \W_{g}^{(\alpha, \beta)}\left(  \phi_{n}\right)\right\rangle_{L^2(\R^2, A_{\alpha, \beta} \otimes \sigma_{\alpha, \beta})} \\& \leq A_{\alpha, \beta} \otimes \sigma_{\alpha, \beta}(\Sigma).
	\end{align*}
	This completes the proof of the theorem.
\end{proof}

\subsection{Donoho--Stark’s uncertainty principle for the windowed Opdam--Cherednik transform}
Here, we study the version of  Donoho--Stark’s uncertainty principle for the windowed Opdam--Cherednik transform. In particular,  we   investigate the case where $f$  and $\W_{g}^{(\alpha, \beta)}(f)$ are close to zero outside measurable sets.   We start with the following result.
\begin{theorem}
	Let $g \in L^2(\R,  A_{\alpha, \beta})$ be a non-zero window function and    $f\in {L^{2}\left(\R ,A_{\alpha, \beta}  \right)} $ such that $f\neq 0$. Then  for any  $\Sigma  \subset \R^2$  and $\varepsilon \geq 0$ such that
	$$
	\iint_{\Sigma}\left|\W_{g}^{(\alpha, \beta)}(f)(x, \xi)\right|^{2} \; d(A_{\alpha, \beta} \otimes \sigma_{\alpha, \beta})(x, \xi) \geq (1-\varepsilon)\|f\|_{L^2(\R,  A_{\alpha, \beta})}^2 \|g\|_{L^2(\R,  A_{\alpha, \beta})}^2,
	$$
	we have
	$$
	A_{\alpha, \beta} \otimes \sigma_{\alpha, \beta}(\Sigma) \geq 1-\varepsilon.
	$$
\end{theorem}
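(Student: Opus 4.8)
The plan is to derive the lower bound on the measure of $\Sigma$ purely from the uniform pointwise control of the windowed transform, following the standard philosophy behind Donoho--Stark estimates: a function carrying most of its $L^2$-mass on a set cannot do so if that set is too small, once a bound on its sup-norm is available. The only ingredient needed is the $L^\infty$-boundedness estimate \eqref{eq21}, together with the non-vanishing of $f$ and $g$.

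First I would bound the left-hand side of the hypothesis from above. Since $|\W^{(\alpha, \beta)}_g(f)(x,\xi)| \leq \|\W^{(\alpha, \beta)}_g(f)\|_{L^\infty(\R^2, A_{\alpha, \beta} \otimes \sigma_{\alpha, \beta})}$ for almost every $(x,\xi)$, integrating over $\Sigma$ gives
\begin{equation*}
\iint_{\Sigma}\left|\W^{(\alpha, \beta)}_g(f)(x, \xi)\right|^{2} d(A_{\alpha, \beta} \otimes \sigma_{\alpha, \beta})(x, \xi) \leq \left\|\W^{(\alpha, \beta)}_g(f)\right\|^2_{L^\infty(\R^2, A_{\alpha, \beta} \otimes \sigma_{\alpha, \beta})}\; A_{\alpha, \beta} \otimes \sigma_{\alpha, \beta}(\Sigma).
\end{equation*}
Applying \eqref{eq21} to replace the supremum norm, this upper bound becomes $\|f\|^2_{L^2(\R, A_{\alpha, \beta})}\|g\|^2_{L^2(\R, A_{\alpha, \beta})}\; A_{\alpha, \beta} \otimes \sigma_{\alpha, \beta}(\Sigma)$. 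Combining with the concentration hypothesis, I would obtain
\begin{equation*}
(1-\varepsilon)\|f\|^2_{L^2(\R, A_{\alpha, \beta})}\|g\|^2_{L^2(\R, A_{\alpha, \beta})} \leq \|f\|^2_{L^2(\R, A_{\alpha, \beta})}\|g\|^2_{L^2(\R, A_{\alpha, \beta})}\; A_{\alpha, \beta} \otimes \sigma_{\alpha, \beta}(\Sigma).
\end{equation*}

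Finally, since $f \neq 0$ and $g$ is a non-zero window function, the scalar $\|f\|^2_{L^2(\R, A_{\alpha, \beta})}\|g\|^2_{L^2(\R, A_{\alpha, \beta})}$ is strictly positive, so dividing both sides by it yields the desired inequality $A_{\alpha, \beta} \otimes \sigma_{\alpha, \beta}(\Sigma) \geq 1-\varepsilon$. I do not expect any genuine obstacle in this argument, as it reduces to a single application of the pointwise bound followed by a division; the one point that must be respected is the non-degeneracy, namely that both $f$ and $g$ are non-zero, which is exactly what guarantees positivity of the denominator and is why these hypotheses are imposed. It is worth noting that the Plancherel identity \eqref{eq17} is not strictly required for the estimate, although it supplies the natural interpretation of the assumption: by \eqref{eq17} the product $\|f\|^2_{L^2(\R, A_{\alpha, \beta})}\|g\|^2_{L^2(\R, A_{\alpha, \beta})}$ equals $\|\W^{(\alpha, \beta)}_g(f)\|^2_{L^2(\R^2, A_{\alpha, \beta} \otimes \sigma_{\alpha, \beta})}$, so the hypothesis says precisely that a fraction at least $1-\varepsilon$ of the total mass of $\W^{(\alpha, \beta)}_g(f)$ is concentrated in $\Sigma$.
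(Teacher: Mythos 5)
Your proof is correct and follows essentially the same route as the paper's: bound the integral over $\Sigma$ by the squared $L^\infty$-norm times $A_{\alpha, \beta} \otimes \sigma_{\alpha, \beta}(\Sigma)$, invoke the estimate (\ref{eq21}), combine with the concentration hypothesis, and divide by the strictly positive quantity $\|f\|^2_{L^2(\R, A_{\alpha, \beta})}\|g\|^2_{L^2(\R, A_{\alpha, \beta})}$. The paper writes the same chain of inequalities in a single display, with the division step left implicit, so there is no substantive difference.
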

\begin{proof}
	Using the relation    (\ref{eq21}), we get 
	\begin{align*}
	(1-\varepsilon)\|f\|_{L^2(\R,  A_{\alpha, \beta})}^2 \|g\|_{L^2(\R,  A_{\alpha, \beta})}^2 &\leq \iint_{\Sigma}\left|\W_{g}^{(\alpha, \beta)}(f)(x, \xi)\right|^{2} \;d(A_{\alpha, \beta} \otimes \sigma_{\alpha, \beta})(x, \xi)  \\
	&\leq\left\|\W_{g}^{(\alpha, \beta)}(f)\right\|^2_{L^{\infty}\left(\R^2,A_{\alpha, \beta} \otimes \sigma_{\alpha, \beta}\right)} A_{\alpha, \beta} \otimes \sigma_{\alpha, \beta}(\Sigma) \\
	&\leq A_{\alpha, \beta} \otimes \sigma_{\alpha, \beta}(\Sigma)\;\|f\|_{L^2(\R,  A_{\alpha, \beta})}^2 \|g\|_{L^2(\R,  A_{\alpha, \beta})}^2.
	\end{align*}
	Therefore, $
	A_{\alpha, \beta} \otimes \sigma_{\alpha, \beta}(\Sigma) \geq 1-\varepsilon.
	$
\end{proof}

The following proposition shows that the windowed Opdam--Cherednik transform cannot be concentrated in any small set.
\begin{proposition}\label{eq35}
	Let $g \in L^2(\R,  A_{\alpha, \beta})$ be a non-zero window function.
	Then for any function $f \in L^2(\R,  A_{\alpha, \beta})$ and for any     $\Sigma \subset \R^2 $ such that $A_{\alpha, \beta} \otimes \sigma_{\alpha, \beta}(\Sigma)<1$, we have 
	$$
	\left\|\chi_{\Sigma^{c}} \W_{g}^{(\alpha, \beta)}(f)\right\|_{L^{2}\left(\R^2,A_{\alpha, \beta} \otimes \sigma_{\alpha, \beta}\right)} \geq \sqrt{1-A_{\alpha, \beta} \otimes \sigma_{\alpha, \beta}(\Sigma)}\;\|f\|_{L^2(\R,  A_{\alpha, \beta})} \|g\|_{L^2(\R,  A_{\alpha, \beta})} .
	$$
\end{proposition}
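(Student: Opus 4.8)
The plan is to derive this lower bound directly from the two boundedness properties of the windowed Opdam--Cherednik transform already recorded above: Plancherel's formula (\ref{eq17}) and the $L^\infty$-estimate (\ref{eq21}). The guiding idea is that the squared $L^2$-mass of $\W_g^{(\alpha,\beta)}(f)$ splits additively between $\Sigma$ and its complement $\Sigma^c$, so an upper bound for the mass concentrated on $\Sigma$ forces a matching lower bound for the mass living on $\Sigma^c$. This is exactly the mechanism behind the classical Donoho--Stark estimate for the windowed Fourier transform, transplanted to the present setting.

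First I would write the orthogonal decomposition
$$
\left\|\W_g^{(\alpha,\beta)}(f)\right\|^2_{L^2(\R^2,A_{\alpha,\beta}\otimes\sigma_{\alpha,\beta})}
= \left\|\chi_\Sigma \W_g^{(\alpha,\beta)}(f)\right\|^2_{L^2(\R^2,A_{\alpha,\beta}\otimes\sigma_{\alpha,\beta})}
+ \left\|\chi_{\Sigma^c}\W_g^{(\alpha,\beta)}(f)\right\|^2_{L^2(\R^2,A_{\alpha,\beta}\otimes\sigma_{\alpha,\beta})},
$$
which holds simply because $\chi_\Sigma^2 + \chi_{\Sigma^c}^2 = 1$ pointwise. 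By Plancherel's formula (\ref{eq17}) the left-hand side equals $\|f\|_{L^2(\R,A_{\alpha,\beta})}^2\,\|g\|_{L^2(\R,A_{\alpha,\beta})}^2$.

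Next I would estimate the part supported on $\Sigma$ by pulling out the supremum norm and using (\ref{eq21}):
$$
\left\|\chi_\Sigma \W_g^{(\alpha,\beta)}(f)\right\|^2_{L^2(\R^2,A_{\alpha,\beta}\otimes\sigma_{\alpha,\beta})}
= \iint_\Sigma \left|\W_g^{(\alpha,\beta)}(f)(x,\xi)\right|^2 d(A_{\alpha,\beta}\otimes\sigma_{\alpha,\beta})(x,\xi)
\leq \left\|\W_g^{(\alpha,\beta)}(f)\right\|^2_{L^\infty(\R^2,A_{\alpha,\beta}\otimes\sigma_{\alpha,\beta})}\, A_{\alpha,\beta}\otimes\sigma_{\alpha,\beta}(\Sigma)
\leq A_{\alpha,\beta}\otimes\sigma_{\alpha,\beta}(\Sigma)\,\|f\|_{L^2(\R,A_{\alpha,\beta})}^2\,\|g\|_{L^2(\R,A_{\alpha,\beta})}^2.
$$
Substituting this bound into the decomposition and rearranging gives
$$
\left\|\chi_{\Sigma^c}\W_g^{(\alpha,\beta)}(f)\right\|^2_{L^2(\R^2,A_{\alpha,\beta}\otimes\sigma_{\alpha,\beta})}
\geq \bigl(1-A_{\alpha,\beta}\otimes\sigma_{\alpha,\beta}(\Sigma)\bigr)\,\|f\|_{L^2(\R,A_{\alpha,\beta})}^2\,\|g\|_{L^2(\R,A_{\alpha,\beta})}^2,
$$
and taking square roots yields the claim.

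In all honesty there is no genuine obstacle here: the result is an immediate consequence of combining (\ref{eq17}) with (\ref{eq21}), and the only role of the hypothesis $A_{\alpha,\beta}\otimes\sigma_{\alpha,\beta}(\Sigma)<1$ is to guarantee that the quantity $1-A_{\alpha,\beta}\otimes\sigma_{\alpha,\beta}(\Sigma)$ under the square root is strictly positive, so that the inequality carries genuine content; for sets of measure at least $1$ the right-hand side is nonpositive and the estimate is vacuous. The one point worth stating carefully is the pointwise bound $\iint_\Sigma|\cdot|^2 \leq \|\cdot\|_\infty^2\,(A_{\alpha,\beta}\otimes\sigma_{\alpha,\beta})(\Sigma)$, which is where the finiteness-of-measure structure and the $L^\infty$ control enter.
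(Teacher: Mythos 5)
Your proof is correct and follows essentially the same route as the paper's: both split $\|\W_g^{(\alpha,\beta)}(f)\|_{L^2}^2$ over $\Sigma$ and $\Sigma^c$, bound the $\Sigma$-part by $A_{\alpha,\beta}\otimes\sigma_{\alpha,\beta}(\Sigma)\,\|f\|^2\|g\|^2$ via the $L^\infty$-estimate (\ref{eq21}), and conclude with Plancherel's formula (\ref{eq17}) and a rearrangement. The only difference is cosmetic ordering of when Plancherel is invoked, so there is nothing to add.
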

\begin{proof}
	For  any  function $f \in L^2(\R,  A_{\alpha, \beta})$, using the relation (\ref{eq21}), we get  
	$$
	\begin{aligned}
	&\left\|\W_{g}^{(\alpha, \beta)}(f)\right\|_{L^{2}\left(\R^2,A_{\alpha, \beta} \otimes \sigma_{\alpha, \beta}\right)}^{2} 
	=\left\|\chi_{\Sigma} \W_{g}^{(\alpha, \beta)}(f)+\chi_{\Sigma^c} \W_{g}^{(\alpha, \beta)}(f)\right\|_{L^{2}\left(\R^2,A_{\alpha, \beta} \otimes \sigma_{\alpha, \beta}\right)}^{2} \\
	& = \left\|\chi_{\Sigma} \W_{g}^{(\alpha, \beta)}(f)\right\|_{L^{2}\left(\R^2,A_{\alpha, \beta} \otimes \sigma_{\alpha, \beta}\right)}^{2}+\left\|\chi_{\Sigma^{c}} \W_{g}^{(\alpha, \beta)}(f)\right\|_{L^{2}\left(\R^2,A_{\alpha, \beta} \otimes \sigma_{\alpha, \beta}\right)}^{2} \\
	& \leq A_{\alpha, \beta} \otimes \sigma_{\alpha, \beta}(\Sigma)\left\|\W_{g}^{(\alpha, \beta)}(f)\right\|_{L^{\infty}\left(\R^2,A_{\alpha, \beta} \otimes \sigma_{\alpha, \beta}\right)}^{2}+\left\|\chi_{\Sigma^{c}} \W_{g}^{(\alpha, \beta)}(f)\right\|_{L^{2}\left(\R^2,A_{\alpha, \beta} \otimes \sigma_{\alpha, \beta}\right)}^{2} \\
	& \leq A_{\alpha, \beta} \otimes \sigma_{\alpha, \beta}(\Sigma)\; \|f\|_{L^2(\R,  A_{\alpha, \beta})}^{2}\; \|g\|_{L^2(\R,  A_{\alpha, \beta})}^{2}+\left\|\chi_{\Sigma^{c}} \W_{g}^{(\alpha, \beta)}(f)\right\|_{L^{2}\left(\R^2,A_{\alpha, \beta} \otimes \sigma_{\alpha, \beta}\right)}^{2}.
	\end{aligned}
	$$
	Thus,  using Plancherel's formula (\ref{eq17}), we obtain 
	$$
	\left\|\chi_{\Sigma^{c}} \W_{g}^{(\alpha, \beta)}(f)\right\|_{L^{2}\left(\R^2,A_{\alpha, \beta} \otimes \sigma_{\alpha, \beta}\right)} \geq \sqrt{1-A_{\alpha, \beta} \otimes \sigma_{\alpha, \beta}(\Sigma)}\;\|f\|_{L^2(\R,  A_{\alpha, \beta})} \|g\|_{L^2(\R,  A_{\alpha, \beta})} .
	$$
\end{proof}

\begin{definition}
	Let $E$ be a measurable subset of $\R$ and $0 \leq \varepsilon_{E}<1$. Then we say that a  function  $f\in L^p(\R,  A_{\alpha, \beta}), 1 \leq p \leq 2,$ is   $\varepsilon_{E}$-concentrated on $E$ in $L^p(\R,  A_{\alpha, \beta})$-norm,   if
	$$
	\left\|\chi_{E^{c}} f\right\|_{L^p(\R,  A_{\alpha, \beta})} \leq \varepsilon_{E}\|f\|_{L^p(\R,  A_{\alpha, \beta})}.
	$$
	If $\varepsilon_{E}=0$, then $E$ contains the support of  $f$ .
\end{definition}

\begin{definition}
	Let $\Sigma$ be a measurable subset of $\R^2$ and $0 \leq \varepsilon_{\Sigma}<1$.   Let $f, g \in L^2(\R,  A_{\alpha, \beta})$ be two non-zero functions. We say that $\W_{g}^{(\alpha, \beta)}(f)$ is $ \varepsilon_{\Sigma}$-time-frequency concentrated on $\Sigma$, if
	$$
	\left\|\chi_{\Sigma^{c}} \W_{g}^{(\alpha, \beta)}(f)\right\|_{L^{2}\left(\R^2,A_{\alpha, \beta} \otimes \sigma_{\alpha, \beta}\right)}  \leq \varepsilon_{\Sigma} \left\|  \W_{g}^{(\alpha, \beta)}(f)\right\|_{L^{2}\left(\R^2,A_{\alpha, \beta} \otimes \sigma_{\alpha, \beta}\right)}.
	$$
\end{definition}
If $\W_{g}^{(\alpha, \beta)}(f)$ is $ \varepsilon_{\Sigma}$-time-frequency concentrated on $\Sigma$, then in the following, we obtain an estimate for the size of the  essential support   of the windowed Opdam--Cherednik transform.
\begin{theorem}
	Let $g \in L^2(\R,  A_{\alpha, \beta})$ be a non-zero window function and
	$f \in L^2(\R,  A_{\alpha, \beta})$   such that $f\neq 0$.  Let    $\Sigma  \subset \R^2$ such that  $A_{\alpha, \beta} \otimes \sigma_{\alpha, \beta}(\Sigma)<\infty$ and $\varepsilon_\Sigma \geq 0$.  If
	$\W_{g}^{(\alpha, \beta)}(f)$ is $ \varepsilon_{\Sigma}$-time-frequency concentrated on $\Sigma,$ then  
	$$A_{\alpha, \beta} \otimes \sigma_{\alpha, \beta}(\Sigma)\geq (1-\varepsilon_\Sigma^2).$$
\end{theorem}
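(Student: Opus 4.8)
The plan is to run the standard Donoho--Stark concentration argument, exactly in the spirit of the proof of Proposition~\ref{eq35}, but now exploiting the hypothesis to bound the mass on $\Sigma$ from \emph{below}. Write $W := \W_{g}^{(\alpha, \beta)}(f)$ and abbreviate the norm by $\|\cdot\|_2 := \|\cdot\|_{L^2(\R^2, A_{\alpha, \beta} \otimes \sigma_{\alpha, \beta})}$. Since $\chi_\Sigma$ and $\chi_{\Sigma^c}$ have disjoint supports, the orthogonal splitting
$$
\|W\|_2^2 = \left\|\chi_\Sigma W\right\|_2^2 + \left\|\chi_{\Sigma^c} W\right\|_2^2
$$
holds and will be the backbone of the argument.

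First I would convert the concentration hypothesis into a lower bound on the mass inside $\Sigma$. By the definition of $\varepsilon_\Sigma$-time-frequency concentration, $\left\|\chi_{\Sigma^c} W\right\|_2^2 \leq \varepsilon_\Sigma^2 \|W\|_2^2$, so the splitting above immediately yields
$$
\left\|\chi_\Sigma W\right\|_2^2 = \|W\|_2^2 - \left\|\chi_{\Sigma^c} W\right\|_2^2 \geq (1 - \varepsilon_\Sigma^2)\,\|W\|_2^2.
$$

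Next I would produce a matching upper bound using the pointwise estimate~\eqref{eq21} together with Plancherel's formula~\eqref{eq17}. The $L^\infty$ bound gives $\|W\|_{L^\infty(\R^2, A_{\alpha, \beta} \otimes \sigma_{\alpha, \beta})} \leq \|f\|_{L^2(\R, A_{\alpha, \beta})}\|g\|_{L^2(\R, A_{\alpha, \beta})}$, and integrating $|W|^2$ only over $\Sigma$ produces
$$
\left\|\chi_\Sigma W\right\|_2^2 \leq \|W\|_{L^\infty(\R^2, A_{\alpha, \beta} \otimes \sigma_{\alpha, \beta})}^2 \; A_{\alpha, \beta} \otimes \sigma_{\alpha, \beta}(\Sigma) \leq \|f\|_{L^2(\R, A_{\alpha, \beta})}^2 \|g\|_{L^2(\R, A_{\alpha, \beta})}^2 \; A_{\alpha, \beta} \otimes \sigma_{\alpha, \beta}(\Sigma).
$$
By Plancherel~\eqref{eq17}, the product $\|f\|_{L^2(\R, A_{\alpha, \beta})}^2 \|g\|_{L^2(\R, A_{\alpha, \beta})}^2$ equals $\|W\|_2^2$, so the right-hand side is exactly $A_{\alpha, \beta} \otimes \sigma_{\alpha, \beta}(\Sigma)\,\|W\|_2^2$.

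Finally I would chain the two bounds to obtain $(1 - \varepsilon_\Sigma^2)\|W\|_2^2 \leq \left\|\chi_\Sigma W\right\|_2^2 \leq A_{\alpha, \beta} \otimes \sigma_{\alpha, \beta}(\Sigma)\,\|W\|_2^2$. Because $f \neq 0$ and $g \neq 0$, Plancherel gives $\|W\|_2^2 = \|f\|_{L^2(\R, A_{\alpha, \beta})}^2 \|g\|_{L^2(\R, A_{\alpha, \beta})}^2 > 0$, so dividing through yields the claim $A_{\alpha, \beta} \otimes \sigma_{\alpha, \beta}(\Sigma) \geq 1 - \varepsilon_\Sigma^2$. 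There is no genuine obstacle here; the two hypotheses $f \neq 0$ and $A_{\alpha, \beta} \otimes \sigma_{\alpha, \beta}(\Sigma) < \infty$ serve only to guarantee, respectively, that the final division by $\|W\|_2^2$ is legitimate and that the upper estimate on $\left\|\chi_\Sigma W\right\|_2^2$ is finite and meaningful.
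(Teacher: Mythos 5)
Your proposal is correct and follows essentially the same route as the paper's own proof: both rest on the orthogonal splitting $\|W\|_2^2 = \|\chi_\Sigma W\|_2^2 + \|\chi_{\Sigma^c} W\|_2^2$, the concentration hypothesis to lower-bound $\|\chi_\Sigma W\|_2^2$ by $(1-\varepsilon_\Sigma^2)\|W\|_2^2$, the $L^\infty$ estimate \eqref{eq21} to upper-bound it by $\|f\|^2\|g\|^2\, A_{\alpha,\beta}\otimes\sigma_{\alpha,\beta}(\Sigma)$, and Plancherel \eqref{eq17} with $f\neq 0$, $g\neq 0$ to divide out $\|W\|_2^2$. The only difference is cosmetic (you invoke Plancherel at the end rather than at the start), so no further comparison is needed.
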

\begin{proof}
Since $\W_{g}^{(\alpha, \beta)}(f)$ is $ \varepsilon_{\Sigma}$-time-frequency concentrated on $\Sigma$, using  Plancherel's formula (\ref{eq17}), we deduce that
	$$
	\begin{aligned}
	\Vert f \Vert_{L^2(\R,  A_{\alpha, \beta})}^2 \; \Vert g \Vert_{L^2(\R,  A_{\alpha, \beta})}^2
	&=	\left\Vert \W^{(\alpha, \beta)}_g(f) \right\Vert_{L^2(\R^2,  A_{\alpha, \beta}\otimes \sigma_{\alpha, \beta})}^2 \\
	&= 	 \left \|   \chi_{\Sigma^{c}}  \W^{(\alpha, \beta)}_g(f) \right\|_{L^2(\R^2,  A_{\alpha, \beta}\otimes \sigma_{\alpha, \beta})}^2 +	 \left \|  \chi_{\Sigma} \W^{(\alpha, \beta)}_g(f) \right\|_{L^2(\R^2,  A_{\alpha, \beta}\otimes \sigma_{\alpha, \beta})}^2 \\
	& \leq\varepsilon_{\Sigma}^2 \left\|  \W_{g}^{(\alpha, \beta)}(f)\right\|_{L^{2}\left(\R^2,A_{\alpha, \beta} \otimes \sigma_{\alpha, \beta}\right)}^2+ \left \|  \chi_{\Sigma}   \W^{(\alpha, \beta)}_g(f) \right\|_{L^2(\R^2,  A_{\alpha, \beta}\otimes \sigma_{\alpha, \beta})}^2.
	\end{aligned}
	$$
	Hence,  using  the relation (\ref{eq21}), we obtain 
	\begin{align}\label{eq30}
	(1-\varepsilon_\Sigma^2) \Vert f \Vert_{L^2(\R,  A_{\alpha, \beta})}^2 \; \Vert g \Vert_{L^2(\R,  A_{\alpha, \beta})}^2 
	&\leq \left \|  \chi_{\Sigma} \W^{(\alpha, \beta)}_g(f) \right\|_{L^2(\R^2,  A_{\alpha, \beta}\otimes \sigma_{\alpha, \beta})}^2\\\nonumber
	& \leq \left\|\W_{g}^{(\alpha, \beta)}(f)\right\|_{L^{\infty}\left(\R^2,A_{\alpha, \beta} \otimes \sigma_{\alpha, \beta}\right)}^{2} \;A_{\alpha, \beta} \otimes \sigma_{\alpha, \beta}(\Sigma)\\ \nonumber
	&\leq  \|f\|_{L^2(\R,  A_{\alpha, \beta})}^{2}\; \|g\|_{L^2(\R,  A_{\alpha, \beta})}^{2}\;A_{\alpha, \beta} \otimes \sigma_{\alpha, \beta}(\Sigma),
	\end{align}
	which completes the proof.
\end{proof}
\begin{theorem}
	Let $\Sigma  \subset \R^2$ such that  $A_{\alpha, \beta} \otimes \sigma_{\alpha, \beta}(\Sigma)<\infty$, $\varepsilon_\Sigma \geq 0$,  $g \in L^2(\R,  A_{\alpha, \beta})$ be a non-zero window function, and
	$f \in L^2(\R,  A_{\alpha, \beta})$ such that $f\neq 0$. If
	$\W_{g}^{(\alpha, \beta)}(f)$ is $ \varepsilon_{\Sigma}$-time-frequency concentrated on $\Sigma$, then for every $p > 2$,  we have 
	$$A_{\alpha, \beta} \otimes \sigma_{\alpha, \beta}(\Sigma)\geq (1-\varepsilon_\Sigma^2)^{\frac{p}{p-2}}.$$
\end{theorem}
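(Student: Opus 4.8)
The plan is to refine the argument of the previous theorem by replacing the crude $L^\infty$-estimate with a sharper Hölder estimate that invokes the $L^p$-boundedness property (\ref{eq22}) of the windowed Opdam--Cherednik transform; the earlier theorem is then recovered as the formal endpoint $p \to \infty$.

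First I would reproduce the opening step of the preceding proof verbatim. Combining the orthogonal decomposition
\[
\left\| \W_g^{(\alpha,\beta)}(f) \right\|_{L^2(\R^2, A_{\alpha,\beta} \otimes \sigma_{\alpha,\beta})}^2 = \left\| \chi_\Sigma \W_g^{(\alpha,\beta)}(f) \right\|_{L^2(\R^2, A_{\alpha,\beta} \otimes \sigma_{\alpha,\beta})}^2 + \left\| \chi_{\Sigma^c} \W_g^{(\alpha,\beta)}(f) \right\|_{L^2(\R^2, A_{\alpha,\beta} \otimes \sigma_{\alpha,\beta})}^2
\]
with the $\varepsilon_\Sigma$-time-frequency concentration hypothesis and Plancherel's formula (\ref{eq17}), I obtain the lower bound
\[
(1 - \varepsilon_\Sigma^2)\, \|f\|_{L^2(\R,A_{\alpha,\beta})}^2 \, \|g\|_{L^2(\R,A_{\alpha,\beta})}^2 \leq \left\| \chi_\Sigma \W_g^{(\alpha,\beta)}(f) \right\|_{L^2(\R^2, A_{\alpha,\beta} \otimes \sigma_{\alpha,\beta})}^2 .
\]

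The key new step is to estimate the right-hand side by Hölder's inequality rather than by the $L^\infty$ bound. For $p > 2$ the exponents $p/2$ and $p/(p-2)$ are conjugate, so applying Hölder's inequality to $\iint_\Sigma \left| \W_g^{(\alpha,\beta)}(f) \right|^2 \, d(A_{\alpha,\beta} \otimes \sigma_{\alpha,\beta})$ and using that $\chi_\Sigma^{p/(p-2)} = \chi_\Sigma$ gives
\[
\left\| \chi_\Sigma \W_g^{(\alpha,\beta)}(f) \right\|_{L^2(\R^2, A_{\alpha,\beta} \otimes \sigma_{\alpha,\beta})}^2 \leq \left\| \W_g^{(\alpha,\beta)}(f) \right\|_{L^p(\R^2, A_{\alpha,\beta} \otimes \sigma_{\alpha,\beta})}^2 \, \big( A_{\alpha,\beta} \otimes \sigma_{\alpha,\beta}(\Sigma) \big)^{\frac{p-2}{p}} .
\]
I would then bound the $L^p$-factor using (\ref{eq22}), namely $\left\| \W_g^{(\alpha,\beta)}(f) \right\|_{L^p(\R^2, A_{\alpha,\beta} \otimes \sigma_{\alpha,\beta})} \leq \|f\|_{L^2(\R,A_{\alpha,\beta})} \|g\|_{L^2(\R,A_{\alpha,\beta})}$.

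Chaining the two displays and cancelling the common nonzero factor $\|f\|_{L^2(\R,A_{\alpha,\beta})}^2 \|g\|_{L^2(\R,A_{\alpha,\beta})}^2$ yields $1 - \varepsilon_\Sigma^2 \leq \big( A_{\alpha,\beta} \otimes \sigma_{\alpha,\beta}(\Sigma) \big)^{\frac{p-2}{p}}$; since $t \mapsto t^{p/(p-2)}$ is increasing on $[0,\infty)$ and $1-\varepsilon_\Sigma^2 \geq 0$, raising both sides to the power $p/(p-2) > 0$ gives the claimed inequality. There is no genuine obstacle here — the argument is a routine sharpening of the earlier proof — and the only point requiring a little care is the selection of the conjugate Hölder exponents $p/2$ and $p/(p-2)$, which is exactly what produces the exponent $(p-2)/p$ on the measure of $\Sigma$ and hence, after inversion, the desired power $p/(p-2)$. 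As a consistency check, letting $p \to \infty$ recovers the bound $A_{\alpha,\beta} \otimes \sigma_{\alpha,\beta}(\Sigma) \geq 1 - \varepsilon_\Sigma^2$ of the previous theorem.
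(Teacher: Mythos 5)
Your proposal is correct and follows essentially the same route as the paper: both start from the lower bound $(1-\varepsilon_\Sigma^2)\,\|f\|^2_{L^2(\R,A_{\alpha,\beta})}\|g\|^2_{L^2(\R,A_{\alpha,\beta})} \leq \bigl\|\chi_\Sigma \W_g^{(\alpha,\beta)}(f)\bigr\|^2_{L^2(\R^2,A_{\alpha,\beta}\otimes\sigma_{\alpha,\beta})}$ (which the paper cites from its inequality (\ref{eq30}) and you rederive), then apply H\"older's inequality with conjugate exponents $\frac{p}{2}$ and $\frac{p}{p-2}$, and finish with the $L^p$-bound (\ref{eq22}). No gaps; the only cosmetic difference is that you spell out the cancellation and the monotonicity of $t \mapsto t^{p/(p-2)}$, which the paper leaves implicit.
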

\begin{proof}
	Since $\W_{g}^{(\alpha, \beta)}(f)$ is $ \varepsilon_{\Sigma}$-time-frequency concentrated on $\Sigma$, from \eqref{eq30}, we have 
	\begin{align*} 
	(1-\varepsilon_\Sigma^2) \Vert f \Vert_{L^2(\R,  A_{\alpha, \beta})}^2 \; \Vert g \Vert_{L^2(\R,  A_{\alpha, \beta})}^2 
	&\leq \left \|  \chi_{\Sigma} \W^{(\alpha, \beta)}_g(f) \right\|_{L^2(\R^2,  A_{\alpha, \beta}\otimes \sigma_{\alpha, \beta})}^2.
	\end{align*}
	Again, aplying H\"older’s inequality for the conjugate exponents $\frac{p}{2}$ and $\frac{p}{p-2}$, we get
	\begin{align*}
	\left \|  \chi_{\Sigma} \W^{(\alpha, \beta)}_g(f) \right\|_{L^2(\R^2,  A_{\alpha, \beta}\otimes \sigma_{\alpha, \beta})}^2&\leq  \left \|  \W^{(\alpha, \beta)}_g(f) \right\|_{L^p(\R^2,  A_{\alpha, \beta}\otimes \sigma_{\alpha, \beta})}^2(A_{\alpha, \beta} \otimes \sigma_{\alpha, \beta}(\Sigma))^{\frac{p-2}{p}}.
	\end{align*}
	Now, using  the relation \eqref{eq22}, we obtain 
	\begin{align*} 
	\left \|  \chi_{\Sigma} \W^{(\alpha, \beta)}_g(f) \right\|_{L^2(\R^2,  A_{\alpha, \beta}\otimes \sigma_{\alpha, \beta})}^2&\leq\; \|f\|_{L^2(\R, A_{\alpha, \beta})}^{2} \|g\|_{L^2(\R,  A_{\alpha, \beta})}^{2}(A_{\alpha, \beta} \otimes \sigma_{\alpha, \beta}(\Sigma))^{\frac{p-2}{p}}.
	\end{align*}
	Hence, 	$$A_{\alpha, \beta} \otimes \sigma_{\alpha, \beta}(\Sigma)\geq (1-\varepsilon_\Sigma^2)^{\frac{p}{p-2}}.$$
\end{proof}

\begin{theorem}
	Let $\Sigma  \subset \R^2$ such that  $A_{\alpha, \beta} \otimes \sigma_{\alpha, \beta}(\Sigma)<\infty$, $g \in L^2(\R,  A_{\alpha, \beta})$ be a non-zero window function, and
	$f \in L^1(\R, A_{\alpha, \beta}) \cap L^2(\R, A_{\alpha, \beta})$ such that $\|  \W^{(\alpha, \beta)}_g(f) \|_{L^2(\R^2,  A_{\alpha, \beta}\otimes \sigma_{\alpha, \beta})}=1$. Let $E\subset \R$ such that $A_{\alpha, \beta}(E)<\infty$. If $f$ is $ \varepsilon_{E}$-concentrated on $E$ in $ L^1(\R,  A_{\alpha, \beta})$-norm  and 
	$\W_{g}^{(\alpha, \beta)}(f)$ is $ \varepsilon_{\Sigma}$-time-frequency concentrated on $\Sigma$, then  
	$$A_{\alpha, \beta}(E)\geq (1-\varepsilon_E)^2\;\|f\|_{L^1(\R,  A_{\alpha, \beta})}^2\;\|g\|_{L^2(\R,  A_{\alpha, \beta})}^2,$$
	and 
	$$ \|f\|_{L^2(\R, A_{\alpha, \beta})}^{2} \;\|g\|_{L^2(\R,  A_{\alpha, \beta})}^{2}\;A_{\alpha, \beta} \otimes \sigma_{\alpha, \beta}(\Sigma)\geq (1-\varepsilon_\Sigma^2).$$
	In particular,
	$$A_{\alpha, \beta}(E)~ A_{\alpha, \beta} \otimes \sigma_{\alpha, \beta}(\Sigma)\;  \|f\|_{L^2(\R,  A_{\alpha, \beta})}^{2}\geq  (1-\varepsilon_E)^2\;(1-\varepsilon_\Sigma^2) \;\|f\|_{L^1(\R,  A_{\alpha, \beta})}^{2}.$$
	
\end{theorem}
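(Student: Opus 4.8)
The three asserted inequalities decouple neatly: the first is a purely ``time-side'' estimate, the second is the ``frequency-side'' estimate already contained in the preceding theorem, and the third (the ``in particular'' statement) is nothing but their product. The plan is therefore to prove the first inequality from scratch, to read off the second from the chain \eqref{eq30}, and then to multiply the two after rearranging.

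For the first inequality, I would start from the $L^1$-concentration hypothesis. Since $f$ is $\varepsilon_E$-concentrated on $E$ in $L^1(\R,A_{\alpha,\beta})$-norm, we have $\|\chi_{E^c} f\|_{L^1(\R,A_{\alpha,\beta})} \le \varepsilon_E \|f\|_{L^1(\R,A_{\alpha,\beta})}$, and hence by the triangle inequality
$$\|\chi_E f\|_{L^1(\R,A_{\alpha,\beta})} \ge (1-\varepsilon_E)\,\|f\|_{L^1(\R,A_{\alpha,\beta})}.$$
The key step is then to apply the Cauchy--Schwarz inequality with respect to the weighted measure $A_{\alpha,\beta}(x)\,dx$ on the set $E$, which gives
$$\|\chi_E f\|_{L^1(\R,A_{\alpha,\beta})} = \int_E |f(x)|\,A_{\alpha,\beta}(x)\,dx \le \sqrt{A_{\alpha,\beta}(E)}\;\|f\|_{L^2(\R,A_{\alpha,\beta})}.$$
Combining the two displays yields $(1-\varepsilon_E)^2\|f\|_{L^1(\R,A_{\alpha,\beta})}^2 \le A_{\alpha,\beta}(E)\,\|f\|_{L^2(\R,A_{\alpha,\beta})}^2$. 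Finally I would invoke the normalization $\|\W_{g}^{(\alpha,\beta)}(f)\|_{L^2(\R^2,A_{\alpha,\beta}\otimes\sigma_{\alpha,\beta})}=1$, which by Plancherel's formula \eqref{eq17} is equivalent to $\|f\|_{L^2(\R,A_{\alpha,\beta})}^2\,\|g\|_{L^2(\R,A_{\alpha,\beta})}^2 = 1$, i.e.\ $\|f\|_{L^2(\R,A_{\alpha,\beta})}^2 = \|g\|_{L^2(\R,A_{\alpha,\beta})}^{-2}$; substituting this produces exactly the first claimed bound $A_{\alpha,\beta}(E)\ge (1-\varepsilon_E)^2\|f\|_{L^1(\R,A_{\alpha,\beta})}^2\|g\|_{L^2(\R,A_{\alpha,\beta})}^2$.

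For the second inequality, I would observe that it is precisely the outer chain already recorded in \eqref{eq30}: under the $\varepsilon_\Sigma$-time-frequency concentration hypothesis, combining the splitting of $\|\W_{g}^{(\alpha,\beta)}(f)\|_{L^2}^2$ with the $L^\infty$-bound \eqref{eq21} gives $(1-\varepsilon_\Sigma^2)\|f\|_{L^2(\R,A_{\alpha,\beta})}^2\|g\|_{L^2(\R,A_{\alpha,\beta})}^2 \le \|f\|_{L^2(\R,A_{\alpha,\beta})}^2\|g\|_{L^2(\R,A_{\alpha,\beta})}^2\,A_{\alpha,\beta}\otimes\sigma_{\alpha,\beta}(\Sigma)$, which is the asserted second inequality. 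The third statement then follows by multiplying the first inequality by the rearranged second one, $A_{\alpha,\beta}\otimes\sigma_{\alpha,\beta}(\Sigma)\ge (1-\varepsilon_\Sigma^2)\,\|f\|_{L^2(\R,A_{\alpha,\beta})}^{-2}\|g\|_{L^2(\R,A_{\alpha,\beta})}^{-2}$: the factors $\|g\|_{L^2(\R,A_{\alpha,\beta})}^{2}$ cancel, and after multiplying through by $\|f\|_{L^2(\R,A_{\alpha,\beta})}^2$ one obtains $A_{\alpha,\beta}(E)\,A_{\alpha,\beta}\otimes\sigma_{\alpha,\beta}(\Sigma)\,\|f\|_{L^2(\R,A_{\alpha,\beta})}^2 \ge (1-\varepsilon_E)^2(1-\varepsilon_\Sigma^2)\|f\|_{L^1(\R,A_{\alpha,\beta})}^2$.

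I do not expect a serious obstacle here, as the argument is essentially bookkeeping. The only points requiring genuine care are: (i) applying Cauchy--Schwarz with the correct weight $A_{\alpha,\beta}(x)\,dx$, so that the factor $A_{\alpha,\beta}(E)$ (rather than Lebesgue measure) appears; and (ii) using the normalization correctly, in order to convert the unwanted factor $\|f\|_{L^2(\R,A_{\alpha,\beta})}^{-2}$ into $\|g\|_{L^2(\R,A_{\alpha,\beta})}^{2}$, so that the first two inequalities are mutually consistent and their product telescopes cleanly into the third. The standing hypothesis $f\in L^1(\R,A_{\alpha,\beta})\cap L^2(\R,A_{\alpha,\beta})$ guarantees that all norms appearing are finite, so every step is legitimate.
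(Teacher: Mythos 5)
Your proposal is correct and follows essentially the same route as the paper's own proof: the triangle inequality plus Cauchy--Schwarz with the weight $A_{\alpha,\beta}(x)\,dx$ for the time-side bound, the chain \eqref{eq30} together with the $L^\infty$-bound \eqref{eq21} and the Plancherel normalization for the frequency-side bound, and multiplication of the two for the final inequality. The only cosmetic difference is the order in which the normalization $\|f\|_{L^2(\R,A_{\alpha,\beta})}\|g\|_{L^2(\R,A_{\alpha,\beta})}=1$ is invoked, which does not affect the argument.
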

\begin{proof}
	Since $\W_{g}^{(\alpha, \beta)}(f)$ is $ \varepsilon_{\Sigma}$-time-frequency concentrated on $\Sigma$, from \eqref{eq30}, we have 
	\begin{align*}
	(1-\varepsilon_\Sigma^2) \Vert f \Vert_{L^2(\R,  A_{\alpha, \beta})}^2 \; \Vert g \Vert_{L^2(\R,  A_{\alpha, \beta})}^2 
	&\leq \left \|  \chi_{\Sigma} \W^{(\alpha, \beta)}_g(f) \right\|_{L^2(\R^2,  A_{\alpha, \beta}\otimes \sigma_{\alpha, \beta})}^2.
	\end{align*}
	Since $\|  \W^{(\alpha, \beta)}_g(f) \|_{L^2(\R^2,  A_{\alpha, \beta}\otimes \sigma_{\alpha, \beta})}=1,$ using the relation (\ref{eq21}) and   Plancherel's formula (\ref{eq17}),  we obtain
	\begin{align}\label{eq33}\nonumber
	(1-\varepsilon_\Sigma^2)&\leq \left \|  \chi_{\Sigma} \W^{(\alpha, \beta)}_g(f) \right\|_{L^2(\R^2,  A_{\alpha, \beta}\otimes \sigma_{\alpha, \beta})}^2\\\nonumber
	&\leq \left \|   \W^{(\alpha, \beta)}_g(f) \right\|_{L^\infty(\R^2,  A_{\alpha, \beta}\otimes \sigma_{\alpha, \beta})}^2 ~A_{\alpha, \beta} \otimes \sigma_{\alpha, \beta}(\Sigma)\\
	&\leq  \|f\|_{L^2(\R,  A_{\alpha, \beta})}^{2}~\|g\|_{L^2(\R,  A_{\alpha, \beta})}^{2}\;A_{\alpha, \beta} \otimes \sigma_{\alpha, \beta}(\Sigma).
	\end{align}	
Similarly, since $f$ is $ \varepsilon_{E}$-concentrated on $E$ in $ L^1(\R,  A_{\alpha, \beta})$-norm, using the  Cauchy--Schwarz inequality and the fact that $\|f\|_{L^2(\R,  A_{\alpha, \beta})}\|g\|_{L^2(\R,  A_{\alpha, \beta})}=1$,
 we get 
	\begin{align}\label{eq34} 
	(1-\varepsilon_E) \Vert f \Vert_{L^1(\R,  A_{\alpha, \beta})} \leq \left \|  \chi_{E}f \right\|_{L^1(\R,  A_{\alpha, \beta} )}\leq \left \|  f \right\|_{L^2(\R,  A_{\alpha, \beta} )} ~ A_{\alpha, \beta} (E)^{\frac{1}{2}} =\frac{A_{\alpha, \beta} (E)^{\frac{1}{2}}}{\left \|  g\right\|_{L^2(\R,  A_{\alpha, \beta} )} } .
	\end{align}
Finally, from (\ref{eq33}) and (\ref{eq34}), we have 
	$$A_{\alpha, \beta}(E)~ A_{\alpha, \beta} \otimes \sigma_{\alpha, \beta}(\Sigma)\;  \|f\|_{L^2(\R,  A_{\alpha, \beta})}^{2}\geq  (1-\varepsilon_E)^2\;(1-\varepsilon_\Sigma^2) \;\|f\|_{L^1(\R,  A_{\alpha, \beta})}^{2}.$$
	This completes the proof of the theorem.
\end{proof}

\subsection{Benedicks-type uncertainty principle for the windowed Opdam--Cherednik transform}
In this subsection,  we study Benedicks-type uncertainty principle for the windowed Opdam--Cherednik transform. The following proposition shows that this  transform cannot be concentrated inside a set of  measure arbitrarily small. %We state the following result without proof; however, the result is established in a more general context in  \cite{havin} page 90.  
\begin{proposition}\label{eq41}
	Let $g \in L^2(\R,  A_{\alpha, \beta})$ 
	be a non-zero window function and $\Sigma\subset \mathbb{R}^2$. If    $\left\|P_{\Sigma} P_{g}\right\|<1$,  then there exists a constant $c(\Sigma, g)>0$ such that for every   $f \in  L^2(\R,  A_{\alpha, \beta}),$ we have
	$$\|f\|_{L^2(\R,  A_{\alpha, \beta})} \|g\|_{L^2(\R,  A_{\alpha, \beta})}\leq  c(\Sigma, g)\left\|\chi_{\Sigma^{c}} \W_{g}^{(\alpha, \beta)}(f)\right\|_{L^{2}\left(\R^2,A_{\alpha, \beta} \otimes \sigma_{\alpha, \beta}\right)}.
	$$
\end{proposition}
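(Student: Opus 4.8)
The plan is to exploit the single structural fact that $\W_{g}^{(\alpha,\beta)}(L^2(\R,A_{\alpha,\beta}))$ is exactly the range of the projection $P_g$. Set $F:=\W_{g}^{(\alpha,\beta)}(f)$. Then $F\in\operatorname{Im}P_g$, so $P_g F=F$. Using this invariance, I would write $P_\Sigma F=P_\Sigma P_g F$, and hence, by the definition of the operator norm $\|P_\Sigma P_g\|$,
$$
\left\|\chi_\Sigma\,\W_{g}^{(\alpha,\beta)}(f)\right\|_{L^2(\R^2,A_{\alpha,\beta}\otimes\sigma_{\alpha,\beta})}=\left\|P_\Sigma P_g F\right\|_{L^2(\R^2,A_{\alpha,\beta}\otimes\sigma_{\alpha,\beta})}\le \|P_\Sigma P_g\|\,\|F\|_{L^2(\R^2,A_{\alpha,\beta}\otimes\sigma_{\alpha,\beta})}.
$$

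Next I would decompose the full norm via the orthogonal splitting induced by the complementary characteristic functions $\chi_\Sigma$ and $\chi_{\Sigma^c}$, that is, $\|F\|^2=\|\chi_\Sigma F\|^2+\|\chi_{\Sigma^c}F\|^2$ in $L^2(\R^2,A_{\alpha,\beta}\otimes\sigma_{\alpha,\beta})$. Substituting the bound $\|\chi_\Sigma F\|^2\le\|P_\Sigma P_g\|^2\,\|F\|^2$ from the previous step gives $(1-\|P_\Sigma P_g\|^2)\,\|F\|^2\le\|\chi_{\Sigma^c}F\|^2$. The hypothesis $\|P_\Sigma P_g\|<1$ is precisely what makes the coefficient $1-\|P_\Sigma P_g\|^2$ strictly positive, so the inequality may be rearranged into $\|F\|\le(1-\|P_\Sigma P_g\|^2)^{-1/2}\,\|\chi_{\Sigma^c}F\|$.

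Finally I would invoke Plancherel's formula \eqref{eq17}, namely $\|F\|_{L^2(\R^2,A_{\alpha,\beta}\otimes\sigma_{\alpha,\beta})}=\|f\|_{L^2(\R,A_{\alpha,\beta})}\,\|g\|_{L^2(\R,A_{\alpha,\beta})}$, to replace the left-hand norm. This yields the claimed estimate with the explicit constant $c(\Sigma,g)=(1-\|P_\Sigma P_g\|^2)^{-1/2}$, which is finite and positive exactly because of the standing assumption $\|P_\Sigma P_g\|<1$.

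The argument is a routine Hilbert-space estimate once the invariance $P_g F=F$ is recognized, so there is no genuine obstacle. The only point requiring care is that this invariance rests on $F$ belonging to $\operatorname{Im}P_g=\W_{g}^{(\alpha,\beta)}(L^2(\R,A_{\alpha,\beta}))$, which holds by the very construction of $F$ as a windowed transform; the remainder follows from the operator-norm inequality and the Pythagorean decomposition. I note that no finiteness of $A_{\alpha,\beta}\otimes\sigma_{\alpha,\beta}(\Sigma)$ is needed here, only the spectral condition $\|P_\Sigma P_g\|<1$, which by Theorem \ref{eq26} is in turn guaranteed whenever $A_{\alpha,\beta}\otimes\sigma_{\alpha,\beta}(\Sigma)<1$.
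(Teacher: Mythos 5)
Your proposal is correct and follows essentially the same route as the paper's proof: both rest on the orthogonal splitting $\|F\|^2=\|\chi_{\Sigma}F\|^2+\|\chi_{\Sigma^{c}}F\|^2$, the bound $\|P_{\Sigma}P_{g}F\|\leq\|P_{\Sigma}P_{g}\|\,\|F\|$ obtained from the idempotence of $P_{g}$, the rearrangement made possible by $\|P_{\Sigma}P_{g}\|<1$, and Plancherel's formula (\ref{eq17}), arriving at the identical constant $c(\Sigma,g)=\left(1-\|P_{\Sigma}P_{g}\|^{2}\right)^{-1/2}$. The only cosmetic difference is that you apply the argument directly to $F=\W_{g}^{(\alpha,\beta)}(f)$ using the invariance $P_{g}F=F$, whereas the paper runs the same computation for $P_{g}(F)$ with $F$ arbitrary and then specializes.
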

\begin{proof}
Since $P_{\Sigma}$ is an   orthogonal projection on $L^2(\R^2,A_{\alpha, \beta} \otimes \sigma_{\alpha, \beta})$,   for any $F\in L^2(\R^2,A_{\alpha, \beta} \otimes \sigma_{\alpha, \beta})$,  we get  
$$
\|P_g(F)\|_{L^{2}\left(\R^2,A_{\alpha, \beta} \otimes \sigma_{\alpha, \beta}\right)}^2=\|P_{\Sigma}P_g(F)\|_{L^{2}\left(\R^2,A_{\alpha, \beta} \otimes \sigma_{\alpha, \beta}\right)}^2+\|P_{\Sigma^c}P_g(F)\|_{L^{2}\left(\R^2,A_{\alpha, \beta} \otimes \sigma_{\alpha, \beta}\right)}^2.
$$
Again, using   the identity $P_{\Sigma}P_g(F)=P_{\Sigma}P_g \cdot P_g(F)$,  we get $$
\|P_{\Sigma}P_g(F)\|_{L^{2}\left(\R^2,A_{\alpha, \beta} \otimes \sigma_{\alpha, \beta}\right)}^2 \leq\|P_{\Sigma}P_g\|^2 \| P_g(F)\|_{L^{2}\left(\R^2,A_{\alpha, \beta} \otimes \sigma_{\alpha, \beta}\right)}^2.
$$
Hence, 
\begin{align}\label{eq44}
\|P_g(F)\|_{L^{2}\left(\R^2,A_{\alpha, \beta} \otimes \sigma_{\alpha, \beta}\right)}^2\leq \frac{1}{1-\|P_{\Sigma}P_g\|^2 } \|P_{\Sigma^c}P_g(F)\|_{L^{2}\left(\R^2,A_{\alpha, \beta} \otimes \sigma_{\alpha, \beta}\right)}^2.
\end{align} 
Since $P_{g}$ is an orthogonal projection from $L^2(\R^2,A_{\alpha, \beta} \otimes \sigma_{\alpha, \beta})$ onto $\W_{g}^{(\alpha, \beta)}\left(L^{2}\left(\R,A_{\alpha, \beta} \right)\right) $,  for any $f\in L^2(\R,  A_{\alpha, \beta})$, using the relation  (\ref{eq44}) and  Plancherel's formula (\ref{eq17}),  we get 
%$$\|\W_{g}^{(\alpha, \beta)}(f)\|_{L^{2}\left(\R^2,A_{\alpha, \beta} \otimes \sigma_{\alpha, \beta}\right)}^2\leq \frac{1}{1-\|P_{\Sigma}P_g(F)\|^2 }\left\|\chi_{\Sigma^{c}} \W_{g}^{(\alpha, \beta)}(f)\right\|_{L^{2}\left(\R^2,A_{\alpha, \beta} \otimes \sigma_{\alpha, \beta}\right)}^2.$$
$$\|f\|_{L^2(\R,  A_{\alpha, \beta})} \|g\|_{L^2(\R,  A_{\alpha, \beta})}\leq  \frac{1}{ \sqrt{1-\|P_{\Sigma}P_g\|^2}}\left\|\chi_{\Sigma^{c}} \W_{g}^{(\alpha, \beta)}(f)\right\|_{L^{2}\left(\R^2,A_{\alpha, \beta} \otimes \sigma_{\alpha, \beta}\right)}.$$

The desired result follows  by choosing the constant $c(\Sigma, g)=  \frac{1}{ \sqrt{1-\|P_{\Sigma}P_g\|^2}}$.
\end{proof}
%\begin{proposition}  Let $\Sigma \subset \R^2$ be a subset of finite measure $0<A_{\alpha, \beta} \otimes \sigma_{\alpha, \beta}(\Sigma)<\infty $. Let  $g \in L^2(\R,  A_{\alpha, \beta})$ be a non-zero window function. Then  	$$ 	\dim\left(\W^{(\alpha, \beta)}_g \left(L^2(\R,  A_{\alpha, \beta})\right)  \cap {Im} P_{\Sigma}\right) \leq\left[v_{\alpha}(\Sigma)\right]^{2}<\infty. 	$$ \end{proposition} \begin{proof} 		Proof This follows from Proposition $4.1$ and [7, Lemma 3.1]. \end{proof}
Next,  we obtain Benedicks-type uncertainty principle for  the windowed Opdam--Cherednik transform.
\begin{theorem}\label{eq40}
	Let $g \in L^2(\R,  A_{\alpha, \beta})$ 
	be a non-zero window function  such that  $$\sigma_{\alpha, \beta}\left(  \left\{ \H_{\alpha, \beta} (g)  \neq 0 \right\} \right)<\infty.$$
	Then for any  $\Sigma  \subset \R^2$  such that for almost every $\xi \in  \R, \int_{\R} \chi_{\Sigma}(x, \xi)~A_{\alpha, \beta}(x)\;dx <\infty$, we have
	$$
	\W^{(\alpha, \beta)}_g \left(L^2(\R,  A_{\alpha, \beta})\right)  \cap \operatorname{Im} P_{\Sigma}=\{0\}.
	$$
\end{theorem}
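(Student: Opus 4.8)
The plan is to reduce this two-dimensional statement to a one-dimensional Benedicks-type theorem for the Opdam--Cherednik transform by slicing in the frequency variable $\xi$. Suppose $F \in \W^{(\alpha,\beta)}_g(L^2(\R,A_{\alpha,\beta})) \cap \operatorname{Im}P_\Sigma$; write $F = \W^{(\alpha,\beta)}_g(f)$ with $f \in L^2(\R,A_{\alpha,\beta})$, and note that membership in $\operatorname{Im}P_\Sigma$ means $F = \chi_\Sigma F$. It suffices to prove $F = 0$, since Plancherel's formula \eqref{eq17} then yields $f = 0$. Because $F \in L^2(\R^2, A_{\alpha,\beta}\otimes\sigma_{\alpha,\beta})$, Fubini's theorem shows that for almost every $\xi$ the slice $F_\xi := F(\cdot,\xi)$ lies in $L^2(\R,A_{\alpha,\beta})$ and is supported in $\Sigma_\xi := \{x : (x,\xi)\in\Sigma\}$; by hypothesis $A_{\alpha,\beta}(\Sigma_\xi) = \int_\R \chi_\Sigma(x,\xi)\,A_{\alpha,\beta}(x)\,dx < \infty$, so the Cauchy--Schwarz inequality gives $F_\xi \in L^1(\R,A_{\alpha,\beta}) \cap L^2(\R,A_{\alpha,\beta})$.

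First I would compute the Opdam--Cherednik transform of each slice. By \eqref{eq15}, $F_\xi = f *_{\alpha,\beta} \overline{\M^{(\alpha,\beta)}_\xi g}$, so the convolution identity \eqref{eq10} gives $\H_{\alpha,\beta}(F_\xi) = \H_{\alpha,\beta}(f)\,\H_{\alpha,\beta}\big(\overline{\M^{(\alpha,\beta)}_\xi g}\big)$, and therefore $\supp \H_{\alpha,\beta}(F_\xi) \subseteq \supp \H_{\alpha,\beta}\big(\overline{\M^{(\alpha,\beta)}_\xi g}\big)$. From the definition \eqref{eq11} of the modulation operator we have $\big|\H_{\alpha,\beta}(\M^{(\alpha,\beta)}_\xi g)\big|^2 = \tau_\xi^{(\alpha,\beta)} |\H_{\alpha,\beta}(g)|^2$, so the frequency support of the modulated window equals $\{\lambda : \tau_\xi^{(\alpha,\beta)} |\H_{\alpha,\beta}(g)|^2(\lambda) \neq 0\}$, up to the reflection induced by complex conjugation, which does not change measures.

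The crux of the argument, and the step I expect to be the main obstacle, is to show that this set has finite $\sigma_{\alpha,\beta}$-measure for almost every $\xi$, starting only from $\sigma_{\alpha,\beta}(\{\H_{\alpha,\beta}(g)\neq 0\}) < \infty$. Here one must use the explicit support of the translation measure $d\mu^{(\alpha,\beta)}_{\xi,\lambda}$ from \eqref{eq05}, namely the triangular constraint $||\xi|-|\lambda|| < |z| < |\xi|+|\lambda|$, to bound the support of $\tau_\xi^{(\alpha,\beta)} |\H_{\alpha,\beta}(g)|^2$ in terms of that of $\H_{\alpha,\beta}(g)$. Unlike the classical windowed Fourier setting, where modulation is simply a measure-preserving shift of the frequency support, the generalized translation genuinely spreads supports, so estimating the $\sigma_{\alpha,\beta}$-measure of the spread-out support is the delicate point that must be controlled carefully using the hypothesis on $g$.

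Once this is in hand, each slice $F_\xi$ is a function in $L^1(\R,A_{\alpha,\beta}) \cap L^2(\R,A_{\alpha,\beta})$ whose support has finite $A_{\alpha,\beta}$-measure and whose Opdam--Cherednik transform has support of finite $\sigma_{\alpha,\beta}$-measure. The Benedicks-type uncertainty principle for the Opdam--Cherednik transform of Achak and Daher \cite{dah18} then forces $F_\xi = 0$ for almost every $\xi$, whence $F = 0$ in $L^2(\R^2, A_{\alpha,\beta}\otimes\sigma_{\alpha,\beta})$, and finally $\W^{(\alpha,\beta)}_g(L^2(\R,A_{\alpha,\beta})) \cap \operatorname{Im}P_\Sigma = \{0\}$, as claimed.
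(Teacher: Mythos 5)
Your route is the paper's route, step for step: write $F=\W^{(\alpha,\beta)}_g(f)$, slice in the frequency variable, use the hypothesis on $\Sigma$ to give each slice $F_\xi$ a support of finite $A_{\alpha,\beta}$-measure, use \eqref{eq15} and \eqref{eq10} to get $\H_{\alpha,\beta}(F_\xi)=\H_{\alpha,\beta}(f)\,\H_{\alpha,\beta}\bigl(\overline{\M^{(\alpha,\beta)}_\xi g}\bigr)$, deduce $\supp \H_{\alpha,\beta}(F_\xi)\subset \supp \tau^{(\alpha,\beta)}_\xi|\H_{\alpha,\beta}(g)|^2$, and finish slice by slice with the Benedicks-type theorem of Achak and Daher \cite{dah18}.

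The only divergence is the step you single out as "the crux": proving that $\sigma_{\alpha,\beta}\bigl(\supp \tau^{(\alpha,\beta)}_\xi|\H_{\alpha,\beta}(g)|^2\bigr)<\infty$. You should know that the paper does not resolve this either: its proof dispatches the point with the single clause ``and from the hypothesis, we get $\sigma_{\alpha,\beta}(\{\H_{\alpha,\beta}(F_\xi)\neq 0\})<\infty$,'' with no use of the kernel $\mathcal{K}_{\alpha,\beta}$ or of the triangular constraint at all. Moreover, your instinct that this is delicate is correct. The triangle inequality $||\xi|-|\lambda||<|z|<|\xi|+|\lambda|$ is symmetric in the three variables, so it only yields that $\supp \tau^{(\alpha,\beta)}_\xi h$ is contained in a bounded set $\{|\lambda|<2|\xi|\}$ together with the $|\xi|$-inflation $\{\lambda:\ \exists\, z\in\supp h,\ ||\lambda|-|z||<|\xi|\}$; and an unbounded set of finite measure, such as $\bigcup_n (n,n+2^{-n})$ (which has finite $\sigma_{\alpha,\beta}$-measure since the Plancherel density grows only polynomially), has inflations of infinite measure. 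So finiteness of $\sigma_{\alpha,\beta}(\{\H_{\alpha,\beta}(g)\neq 0\})$ does not by itself force finiteness after generalized translation; one needs extra input (for instance, bounded spectrum of $g$, or a finer property of $\tau^{(\alpha,\beta)}_\xi$ than the support bound). In short, your sketch reproduces the published argument at the same level of completeness, and the hole you honestly left open is present, unacknowledged, in the paper's own proof.
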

\begin{proof}
	Let   $F\in 	\W^{(\alpha, \beta)}_g \left(L^2(\R,  A_{\alpha, \beta})\right)  \cap  \operatorname{Im} P_{\Sigma}$ be a non-trivial function. Then there exists a function $f\in L^2(\R, A_{\alpha, \beta})$ such that $F=	\W^{(\alpha, \beta)}_g(f)$ and $Supp F \subset \Sigma$.  For any $\xi \in \mathbb{R}$, we consider the function
	$$
	F_{\xi}(x)=\W^{(\alpha, \beta)}_g(f)(x, \xi),\qquad x\in \R.
	$$ Then, we get  $
	Supp F_{\xi} \subset\{x \in \R :  (x, \xi ) \in \Sigma \}.$
	Since  for almost every $\xi \in  \R, \int_{\R} \chi_{\Sigma}(x, \xi) A_{\alpha, \beta} (x)\;dx<\infty$,  we have 
	$
	A_{\alpha, \beta} \left( Supp F_{\xi}\right)<\infty.
	$
	Using the relation (\ref{eq15}), we get
	$$\H_{\alpha, \beta} (F_{\xi})= \H_{\alpha, \beta} (f) \; \H_{\alpha, \beta} (\M^{(\alpha, \beta)}_\xi g) \quad \;a.e.
	$$
	Hence,
	$$Supp \H_{\alpha, \beta} (F_{\xi}) \subset  Supp \tau_\xi^{(\alpha, \beta)} | \H_{\alpha, \beta} (g)|^2,$$
	and from the hypothesis, we get  $\sigma_{\alpha, \beta} \left(\left\{\H_{\alpha, \beta} (F_{\xi}) \neq 0\right\}\right)<\infty$. 
	From    Benedicks-type result for the Opdam--Cherednik transform\cite{dah18},  we    conclude  that, for every  $\xi \in \mathbb{R}, F(\cdot, \xi)=0$, which eventually  implies that $F=0.$
\end{proof}

\begin{remark}
	Let $g \in L^2(\R,  A_{\alpha, \beta})$ 
	be a non-zero window function  such that  $$\sigma_{\alpha, \beta}\left(  \left\{ \H_{\alpha, \beta} (g)  \neq 0 \right\} \right)<\infty.$$ Then, for any non-zero function $f \in L^2(\R,  A_{\alpha, \beta})$, we have $A_{\alpha, \beta} \otimes \sigma_{\alpha, \beta} ( Supp\W^{(\alpha, \beta)}_g(f))=\infty$, i.e., the support of $\W^{(\alpha, \beta)}_g(f)$ cannot be of   finite measure.
\end{remark}

\begin{proposition}
	Let $g \in L^2(\R,  A_{\alpha, \beta})$ 
	be a non-zero window function  such that  $$\sigma_{\alpha, \beta}\left(  \left\{\H_{\alpha, \beta} (g)  \neq 0 \right\} \right)<\infty.$$  Let $\Sigma \subset \R^2$ such that $A_{\alpha, \beta} \otimes \sigma_{\alpha, \beta}(\Sigma)<\infty $,  then there exists a constant $c(\Sigma, g)>0$ such that
	$$
	\|f\|_{L^2(\R,  A_{\alpha, \beta})}\leq c(\Sigma, g ) 	\left\|\chi_{\Sigma^{c}} \W_{g}^{(\alpha, \beta)}(f)\right\|_{L^{2}\left(\R^2,A_{\alpha, \beta} \otimes \sigma_{\alpha, \beta}\right)}.
	$$
\end{proposition}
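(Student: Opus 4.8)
The plan is to reduce the statement to Proposition \ref{eq41}, whose conclusion
$$
\|f\|_{L^2(\R, A_{\alpha,\beta})}\,\|g\|_{L^2(\R,A_{\alpha,\beta})} \le c(\Sigma,g)\,\bigl\|\chi_{\Sigma^c}\W_g^{(\alpha,\beta)}(f)\bigr\|_{L^2(\R^2,A_{\alpha,\beta}\otimes\sigma_{\alpha,\beta})}
$$
is available as soon as $\|P_\Sigma P_g\|<1$; dividing by $\|g\|_{L^2(\R,A_{\alpha,\beta})}\neq 0$ and renaming the constant then yields the desired inequality with a new $c(\Sigma,g)$. Thus the entire problem is to upgrade the automatic bound $\|P_\Sigma P_g\|\le 1$ (both factors being orthogonal projections) to the \emph{strict} inequality $\|P_\Sigma P_g\|<1$ under the two standing hypotheses $\sigma_{\alpha,\beta}(\{\H_{\alpha,\beta}(g)\neq 0\})<\infty$ and $A_{\alpha,\beta}\otimes\sigma_{\alpha,\beta}(\Sigma)<\infty$.

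To prove strictness I would argue by contradiction and exploit compactness. By Theorem \ref{eq26}, the finiteness $A_{\alpha,\beta}\otimes\sigma_{\alpha,\beta}(\Sigma)<\infty$ makes $P_\Sigma P_g$ Hilbert--Schmidt, hence compact, so $T:=P_g P_\Sigma P_g$ is a positive, self-adjoint, compact operator with $\|T\|=\|P_\Sigma P_g\|^2$. Suppose $\|P_\Sigma P_g\|=1$. Then $\|T\|=1$ is the largest eigenvalue of $T$ and is attained at some unit vector $F$, i.e. $P_g P_\Sigma P_g F=F$. Since the range of $T$ lies in $\operatorname{Im}P_g$, the relation $F=TF$ forces $P_g F=F$, and therefore
$$
1=\langle TF,F\rangle=\langle P_\Sigma P_g F,\,P_g F\rangle=\langle P_\Sigma F,F\rangle=\|P_\Sigma F\|^2.
$$
As $\|P_\Sigma F\|\le\|F\|=1$ with equality only when $F\in\operatorname{Im}P_\Sigma$, this forces $F\in \W_g^{(\alpha,\beta)}(L^2(\R,A_{\alpha,\beta}))\cap\operatorname{Im}P_\Sigma$.

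The final ingredient is Theorem \ref{eq40}, which asserts that this intersection is $\{0\}$ provided that, for almost every $\xi\in\R$, $\int_\R\chi_\Sigma(x,\xi)\,A_{\alpha,\beta}(x)\,dx<\infty$. This is exactly what the measure condition supplies: by Tonelli's theorem,
$$
A_{\alpha,\beta}\otimes\sigma_{\alpha,\beta}(\Sigma)=\int_\R\Bigl(\int_\R\chi_\Sigma(x,\xi)\,A_{\alpha,\beta}(x)\,dx\Bigr)d\sigma_{\alpha,\beta}(\xi)<\infty,
$$
so the inner integral is finite for $\sigma_{\alpha,\beta}$-almost every $\xi$. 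Theorem \ref{eq40} then gives $F=0$, contradicting $\|F\|=1$; hence $\|P_\Sigma P_g\|<1$, and the proof closes as in the first paragraph. I expect the delicate point to be precisely this contradiction step: one must pass from the \emph{qualitative} triviality of the intersection (Theorem \ref{eq40}) to the \emph{quantitative} spectral gap $\|P_\Sigma P_g\|<1$, and it is the compactness coming from Theorem \ref{eq26} that lets the extremal norm be realized by a genuine eigenvector, bridging the two statements.
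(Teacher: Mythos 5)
Your proposal is correct and follows essentially the same route as the paper: both arguments reduce the statement to Proposition \ref{eq41} by upgrading $\|P_{\Sigma}P_{g}\|\leq 1$ to strict inequality, using the Hilbert--Schmidt (hence compact) property from Theorem \ref{eq26} so that the norm is attained, the equality case for orthogonal projections to place the extremal vector in $\W_{g}^{(\alpha,\beta)}(L^{2}(\R,A_{\alpha,\beta}))\cap\operatorname{Im}P_{\Sigma}$, Tonelli to verify the almost-every-slice finiteness, and Theorem \ref{eq40} to conclude that this intersection is trivial. If anything, your passage through the positive self-adjoint compact operator $T=P_{g}P_{\Sigma}P_{g}=(P_{\Sigma}P_{g})^{*}(P_{\Sigma}P_{g})$ is slightly more careful than the paper's own wording, which identifies $\|P_{\Sigma}P_{g}\|$ with the largest eigenvalue of the non-self-adjoint operator $P_{\Sigma}P_{g}$ -- a step that is rigorously justified precisely by the reduction you make.
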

\begin{proof}
Assume that $$\left\|P_{\Sigma} P_{g}(F)\right\|_{L^{2}\left(\R^2,A_{\alpha, \beta} \otimes \sigma_{\alpha, \beta}\right)} =\|F\|_{L^{2}\left(\R^2,A_{\alpha, \beta} \otimes \sigma_{\alpha, \beta}\right)}, \;F \in L^2(\R^2,A_{\alpha, \beta} \otimes \sigma_{\alpha, \beta}).$$  Since $P_{\Sigma}$
	and $P_{g}$ are  orthogonal projections, we obtain $P_{\Sigma}(F)=P_{g}(F)=F.$ Again, since $A_{\alpha, \beta} \otimes \sigma_{\alpha, \beta}(\Sigma)<\infty $,   for almost every $\xi \in  \R, \int_{\R} \chi_{\Sigma}(x, \xi)\;A_{\alpha, \beta} (x)\;dx<\infty$ and from Theorem \ref{eq40}, we get $F=0.$ Hence,  for $F \neq 0$, we obtain  $$\left\|P_{\Sigma} P_{g}(F)\right\|_{L^{2}\left(\R^2,A_{\alpha, \beta} \otimes \sigma_{\alpha, \beta}\right)} <\|F\|_{L^{2}\left(\R^2,A_{\alpha, \beta} \otimes \sigma_{\alpha, \beta}\right)}. $$  
	Since  $P_{\Sigma} P_{g}$ is a Hilbert--Schmidt operator, we obtain that the   largest eigenvalue $|\lambda|$ of the operator $P_{\Sigma} P_{g}$ satisfy $|\lambda|<1$ and $\left\|P_{\Sigma} P_{g}\right\|=|\lambda|<1$. Finally, using Proposition \ref{eq41},  we get the desired result. 
\end{proof}

\subsection{Heisenberg-type uncertainty principle for the windowed Opdam--Cherednik transform} 
This subsection is devoted to study Heisenberg-type uncertainty inequality for  the windowed Opdam--Cherednik transform for general magnitude $s > 0.$  Indeed, we have the following result.
\begin{theorem}\label{eq39}
	Let $s>0$. Then there exists a constant $c(s, \alpha, \beta)>0$ such that for all $f, g \in L^2(\R,  A_{\alpha, \beta})$, we have
	\begin{align*}
	&\left\|x^s \W_{g}^{(\alpha, \beta)}(f)\right\|_{L^2(\R^2,  A_{\alpha, \beta}\otimes \sigma_{\alpha, \beta})}^2  +\left\|\xi ^s \W_{g}^{(\alpha, \beta)}(f)\right\|_{L^2(\R^2,  A_{\alpha, \beta}\otimes \sigma_{\alpha, \beta})}^2 \\& \geq  c(s, \alpha, \beta )\; \|f\|_{L^2(\R,  A_{\alpha, \beta})}^{2}\;\|g\|_{L^2(\R,  A_{\alpha, \beta})}^{2}.
	\end{align*}
\end{theorem}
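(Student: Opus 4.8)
The plan is to rewrite the left-hand side as a single weighted integral and then to exploit the non-concentration estimate of Proposition~\ref{eq35}, which says that $\W_{g}^{(\alpha, \beta)}(f)$ cannot put most of its $L^2$-mass on a set of small measure. First I would record the identity
\[
\left\|x^s \W_{g}^{(\alpha, \beta)}(f)\right\|_{L^2(\R^2,  A_{\alpha, \beta}\otimes \sigma_{\alpha, \beta})}^2  +\left\|\xi ^s \W_{g}^{(\alpha, \beta)}(f)\right\|_{L^2(\R^2,  A_{\alpha, \beta}\otimes \sigma_{\alpha, \beta})}^2 = \iint_{\R^2}\left(|x|^{2s}+|\xi|^{2s}\right)\left|\W_{g}^{(\alpha, \beta)}(f)(x,\xi)\right|^2 d(A_{\alpha, \beta}\otimes \sigma_{\alpha, \beta})(x,\xi),
\]
and for each $R>0$ introduce the sublevel set $\Sigma_R=\{(x,\xi)\in\R^2 : |x|^{2s}+|\xi|^{2s}<R\}$, on whose complement the weight is bounded below by $R$.

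The key preliminary step is to verify that $\Sigma_R$ has finite measure and that $A_{\alpha, \beta}\otimes \sigma_{\alpha, \beta}(\Sigma_R)\to 0$ as $R\to 0^+$. Since $\Sigma_R$ is contained in the bounded product set $\{|x|\le R^{1/(2s)}\}\times\{|\xi|\le R^{1/(2s)}\}$, and since the weight $A_{\alpha, \beta}(x)\sim|x|^{2\alpha+1}$ is integrable near $0$ while the density $|C_{\alpha, \beta}(\lambda)|^{-2}$ of $\sigma_{\alpha, \beta}$ is integrable on bounded neighbourhoods of the origin, each $\Sigma_R$ has finite measure. As $R\downarrow 0$ the sets $\Sigma_R$ decrease to $\{(0,0)\}$, a null set for $A_{\alpha, \beta}\otimes \sigma_{\alpha, \beta}$, so continuity from above of the finite measure gives $A_{\alpha, \beta}\otimes \sigma_{\alpha, \beta}(\Sigma_R)\to 0$. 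Hence I may fix $R_0=R_0(s,\alpha,\beta)>0$ with $A_{\alpha, \beta}\otimes \sigma_{\alpha, \beta}(\Sigma_{R_0})\le \tfrac12<1$, a choice depending only on $s,\alpha,\beta$.

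With $R_0$ fixed, on $\Sigma_{R_0}^{c}$ we have $|x|^{2s}+|\xi|^{2s}\ge R_0$, so discarding $\Sigma_{R_0}$ from the integral yields
\[
\iint_{\R^2}\left(|x|^{2s}+|\xi|^{2s}\right)\left|\W_{g}^{(\alpha, \beta)}(f)\right|^2 d(A_{\alpha, \beta}\otimes \sigma_{\alpha, \beta}) \ge R_0\left\|\chi_{\Sigma_{R_0}^{c}}\W_{g}^{(\alpha, \beta)}(f)\right\|_{L^2(\R^2,A_{\alpha, \beta}\otimes \sigma_{\alpha, \beta})}^2.
\]
Applying Proposition~\ref{eq35} with $\Sigma=\Sigma_{R_0}$, which is legitimate because its measure is $<1$, bounds the last factor below by $\left(1-A_{\alpha, \beta}\otimes \sigma_{\alpha, \beta}(\Sigma_{R_0})\right)\|f\|_{L^2(\R,A_{\alpha, \beta})}^2\|g\|_{L^2(\R,A_{\alpha, \beta})}^2\ge \tfrac12\|f\|_{L^2(\R,A_{\alpha, \beta})}^2\|g\|_{L^2(\R,A_{\alpha, \beta})}^2$. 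Combining the two displays gives the assertion with $c(s,\alpha,\beta)=R_0/2$. The one genuine obstacle here is the measure-theoretic step: unlike the Euclidean setting, $A_{\alpha,\beta}$ grows exponentially and $\sigma_{\alpha,\beta}$ is only locally finite (and in fact complex), so the finiteness and vanishing of $A_{\alpha, \beta}\otimes \sigma_{\alpha, \beta}(\Sigma_R)$ must be justified from the local behaviour of $A_{\alpha,\beta}$ and of $|C_{\alpha,\beta}(\lambda)|^{-2}$ near the origin; everything else is the routine pairing of the pointwise weight bound on $\Sigma_{R_0}^{c}$ with the non-concentration inequality.
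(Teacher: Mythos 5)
Your proof is correct and takes essentially the same route as the paper: both arguments hinge on applying Proposition~\ref{eq35} to a small neighbourhood of the origin whose $A_{\alpha,\beta}\otimes\sigma_{\alpha,\beta}$-measure is less than one, and then bounding the weight from below on the complement of that set. The only differences are cosmetic --- the paper uses the Euclidean ball $B_{\varepsilon_0}$ with the radial weight $|(x,\xi)|^{2s}$ and then the elementary inequality $(a+b)^{s}\le 2^{s}(a^{s}+b^{s})$ to pass to $|x|^{2s}+|\xi|^{2s}$, whereas you work directly with the sublevel set of $|x|^{2s}+|\xi|^{2s}$ (avoiding the $2^{s}$ loss), and you additionally justify, via local integrability of the densities near the origin and continuity from above, the claim that such small sets have measure less than one, which the paper simply asserts when fixing $\varepsilon_0$.
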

\begin{proof}
	Let $r>0$  and $B_{r}=\{(x, \xi) \in \R^2 : ~|(x, \xi)|<r \}$ be the ball with  centre at origin and radius $r$ in 
	$\R^2$.  Fix $\varepsilon_{0} \leq 1$ small enough such that $A_{\alpha, \beta} \otimes \sigma_{\alpha, \beta}(B_{\varepsilon_{0}})<1$. From Proposition \ref{eq35}, we have 
	\begin{align*}
	& \|f\|_{L^2(\R, A_{\alpha, \beta})}^{2}\; \|g\|_{L^2(\R,  A_{\alpha, \beta})}^{2}\\
	& \leq \frac{1}{ \left[1-A_{\alpha, \beta} \otimes \sigma_{\alpha, \beta}(B_{\varepsilon_{0}})\right]} \iint_{|(x, \xi)| \geq \varepsilon_{0}} \left|\W_{g}^{(\alpha, \beta)}(f)(x, \xi)\right|^{2} ~d(A_{\alpha, \beta} \otimes \sigma_{\alpha, \beta})(x, \xi) \\
	& \leq \frac{1}{\varepsilon_{0}^{2 s}\left[1-A_{\alpha, \beta} \otimes \sigma_{\alpha, \beta}(B_{\varepsilon_{0}})\right]} \iint_{|(x, \xi)| \geq \varepsilon_{0}}|(x, \xi)|^{2 s}\left|\W_{g}^{(\alpha, \beta)}(f)(x, \xi)\right|^{2} ~d(A_{\alpha, \beta} \otimes \sigma_{\alpha, \beta})(x, \xi) \\
	& \leq \frac{1}{\varepsilon_{0}^{2 s}\left[1-A_{\alpha, \beta} \otimes \sigma_{\alpha, \beta}(B_{\varepsilon_{0}})\right]} \iint_{\R^2}|(x, \xi)|^{2 s}\left|\W_{g}^{(\alpha, \beta)}(f)(x, \xi)\right|^{2} ~d(A_{\alpha, \beta} \otimes \sigma_{\alpha, \beta})(x, \xi).
	\end{align*}
	Consequentely,
	\begin{align}\label{eq36}
	\left\||(x, \xi)|^{s}\W_{g}^{(\alpha, \beta)}(f)\right\|_{L^2(\R^2,  A_{\alpha, \beta}\otimes \sigma_{\alpha, \beta})}^2  \geq \varepsilon_{0}^{2 s}\left[1-A_{\alpha, \beta} \otimes \sigma_{\alpha, \beta}(B_{\varepsilon_{0}})\right]\; \|f\|_{L^2(\R,  A_{\alpha, \beta})}^{2}\;\|g\|_{L^2(\R,  A_{\alpha, \beta})}^{2}.
	\end{align}
	Finally, using  the fact that $|a+b|^{s} \leq 2^{s}\left(|a|^{s}+|b|^{s}\right)$ and  from (\ref{eq36}), we get 
	\begin{align*}
	&\varepsilon_{0}^{2 s}\left[1-A_{\alpha, \beta} \otimes \sigma_{\alpha, \beta}(B_{\varepsilon_{0}})\right]\; \|f\|_{L^2(\R,  A_{\alpha, \beta})}^{2}\;\|g\|_{L^2(\R,  A_{\alpha, \beta})}^{2}\\
	&\leq \left\||(x, \xi)|^{s}\W_{g}^{(\alpha, \beta)}(f)\right\|_{L^2(\R^2,  A_{\alpha, \beta}\otimes \sigma_{\alpha, \beta})}^2\\
	&\leq 2^s \left\|x^s \W_{g}^{(\alpha, \beta)}(f)\right\|_{L^2(\R^2,  A_{\alpha, \beta}\otimes \sigma_{\alpha, \beta})}^2  +2^s\left\|\xi ^s \W_{g}^{(\alpha, \beta)}(f)\right\|_{L^2(\R^2,  A_{\alpha, \beta}\otimes \sigma_{\alpha, \beta})}^2 .
	\end{align*}
Hence, we get the desired result with  $c(s, \alpha, \beta)=\varepsilon_{0}^{2 s}2^{-s}\left[1-A_{\alpha, \beta} \otimes \sigma_{\alpha, \beta}(B_{\varepsilon_{0}})\right].$ 
\end{proof}
\subsection{Local uncertainty inequality for the windowed Opdam--Cherednik transform}
Here, we   discuss  results related to  the $L^2(\R, A_{\alpha, \beta})$-mass of the windowed Opdam--Cherednik transform outside sets of finite measure. Indeed, we establish the following result.
\begin{theorem} \label{eq37}
	Let $s>0 $. Then there exists a constant $c(s, \alpha, \beta)>0$ such that for any 	$f, g \in L^2(\R,  A_{\alpha, \beta})$  and any   $\Sigma  \subset \R^2$  such that $A_{\alpha, \beta} \otimes \sigma_{\alpha, \beta}(\Sigma)<\infty$, we have
	$$
	\left\|\W_{g}^{(\alpha, \beta)}(f)\right\|_{L^{2}\left(\Sigma,  A_{\alpha, \beta} \right)} \leq c(s, \alpha, \beta)\left[A_{\alpha, \beta} \otimes \sigma_{\alpha, \beta}(\Sigma)\right]^{\frac{1}{2}}\left\||(x, \xi)|^{s}\W_{g}^{(\alpha, \beta)}(f)\right\|_{L^2(\R^2,  A_{\alpha, \beta}\otimes \sigma_{\alpha, \beta})}.
	$$
\end{theorem}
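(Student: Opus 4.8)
The plan is to estimate the local mass $\iint_{\Sigma}\left|\W_g^{(\alpha,\beta)}(f)\right|^2\,d(A_{\alpha,\beta}\otimes\sigma_{\alpha,\beta})$ by combining two facts already established in the excerpt: the uniform boundedness estimate \eqref{eq21} and the lower estimate \eqref{eq36} obtained inside the proof of the Heisenberg-type Theorem \ref{eq39}. The decisive observation is that \eqref{eq36} trades the product $\|f\|_{L^2}^2\|g\|_{L^2}^2$ for exactly the weighted quantity $\left\||(x,\xi)|^s\W_g^{(\alpha,\beta)}(f)\right\|_{L^2}^2$ appearing on the right-hand side of the claim. Consequently the whole argument collapses to a single crude pointwise majorization on $\Sigma$, with no splitting of $\Sigma$ into near and far zones.

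First I would apply \eqref{eq21}, namely $\left\|\W_g^{(\alpha,\beta)}(f)\right\|_{L^\infty}\le\|f\|_{L^2(\R,A_{\alpha,\beta})}\|g\|_{L^2(\R,A_{\alpha,\beta})}$, to control the restricted integral by the measure of $\Sigma$:
\[
\iint_{\Sigma}\left|\W_g^{(\alpha,\beta)}(f)(x,\xi)\right|^2\,d(A_{\alpha,\beta}\otimes\sigma_{\alpha,\beta})(x,\xi)\le \|f\|_{L^2(\R,A_{\alpha,\beta})}^2\,\|g\|_{L^2(\R,A_{\alpha,\beta})}^2\;A_{\alpha,\beta}\otimes\sigma_{\alpha,\beta}(\Sigma).
\]
Next, with $\varepsilon_0\le 1$ chosen exactly as in Theorem \ref{eq39} so that $A_{\alpha,\beta}\otimes\sigma_{\alpha,\beta}(B_{\varepsilon_0})<1$, I would insert \eqref{eq36} in its rearranged form
\[
\|f\|_{L^2(\R,A_{\alpha,\beta})}^2\,\|g\|_{L^2(\R,A_{\alpha,\beta})}^2\le \frac{1}{\varepsilon_0^{2s}\left[1-A_{\alpha,\beta}\otimes\sigma_{\alpha,\beta}(B_{\varepsilon_0})\right]}\left\||(x,\xi)|^s\W_g^{(\alpha,\beta)}(f)\right\|_{L^2(\R^2,A_{\alpha,\beta}\otimes\sigma_{\alpha,\beta})}^2.
\]
Substituting the second display into the first and taking square roots yields the asserted inequality with the admissible constant $c(s,\alpha,\beta)=\left(\varepsilon_0^{s}\sqrt{1-A_{\alpha,\beta}\otimes\sigma_{\alpha,\beta}(B_{\varepsilon_0})}\right)^{-1}$, which depends only on $s,\alpha,\beta$ and not on $f,g$ or $\Sigma$.

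I do not anticipate any genuine obstacle, since both ingredients are already proved above; the only point deserving a word of care is the legitimacy of the radius $\varepsilon_0$, i.e.\ that $A_{\alpha,\beta}\otimes\sigma_{\alpha,\beta}(B_{\varepsilon_0})<1$ is achievable. This is precisely the choice already made in the proof of Theorem \ref{eq39}, and it is justified because the measure $A_{\alpha,\beta}\otimes\sigma_{\alpha,\beta}$ is absolutely continuous, hence non-atomic, so $A_{\alpha,\beta}\otimes\sigma_{\alpha,\beta}(B_{\varepsilon_0})\to 0$ as $\varepsilon_0\to 0$; any sufficiently small $\varepsilon_0$ then fixes the constant once and for all.
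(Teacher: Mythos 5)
Your proposal is correct and is essentially identical to the paper's own proof: both apply the $L^\infty$ bound \eqref{eq21} to control the restricted $L^2$ mass by $\left[A_{\alpha,\beta}\otimes\sigma_{\alpha,\beta}(\Sigma)\right]^{\frac{1}{2}}\|f\|_{L^2(\R,A_{\alpha,\beta})}\|g\|_{L^2(\R,A_{\alpha,\beta})}$, then substitute the rearranged inequality \eqref{eq36} from the proof of Theorem \ref{eq39}, arriving at the same constant $c(s,\alpha,\beta)=\varepsilon_{0}^{-s}\left[1-A_{\alpha,\beta}\otimes\sigma_{\alpha,\beta}(B_{\varepsilon_{0}})\right]^{-\frac{1}{2}}$. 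Your closing remark justifying that a radius $\varepsilon_0$ with $A_{\alpha,\beta}\otimes\sigma_{\alpha,\beta}(B_{\varepsilon_{0}})<1$ exists is a small clarification the paper leaves implicit, but it does not change the argument.
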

\begin{proof}
	Since $ \|\W_{g}^{(\alpha, \beta)}(f) \|_{L^{2}\left(\Sigma,A_{\alpha, \beta} \otimes \sigma_{\alpha, \beta}\right)}\leq  \|\W_{g}^{(\alpha, \beta)}(f) \|_{L^{\infty}\left(\R^2,A_{\alpha, \beta} \otimes \sigma_{\alpha, \beta}\right)}~ [A_{\alpha, \beta} \otimes \sigma_{\alpha, \beta}(\Sigma)]^{\frac{1}{2}},
	$
	using the relation (\ref{eq21}),  we get 
	$$\left\|\W_{g}^{(\alpha, \beta)}(f)\right\|_{L^{2}\left(\Sigma,  A_{\alpha, \beta} \right)} \leq \left[A_{\alpha, \beta} \otimes \sigma_{\alpha, \beta}(\Sigma)\right]^{\frac{1}{2}}\; \|f\|_{L^2(\R,  A_{\alpha, \beta})}~\|g\|_{L^2(\R,  A_{\alpha, \beta})}.$$
	Thus,  using   Heisenberg's inequality (\ref{eq36}), we get 
	\begin{align*}
	\left\|\W_{g}^{(\alpha, \beta)}(f)\right\|_{L^{2}\left(\Sigma,  A_{\alpha, \beta} \right)} \leq \frac{\left[A_{\alpha, \beta} \otimes \sigma_{\alpha, \beta}(\Sigma)\right]^{\frac{1}{2}}}{\varepsilon_{0}^{s}\left[1-A_{\alpha, \beta} \otimes \sigma_{\alpha, \beta}(B_{\varepsilon_{0}})\right]^{\frac{1}{2}}}\left\||(x, \xi)|^{s}\W_{g}^{(\alpha, \beta)}(f)\right\|_{L^2(\R^2,  A_{\alpha, \beta}\otimes \sigma_{\alpha, \beta})}.
	\end{align*}
	This completes the proof of the theorem with $c(s, \alpha, \beta)=\varepsilon_{0}^{-s}\left[1-A_{\alpha, \beta} \otimes \sigma_{\alpha, \beta}(B_{\varepsilon_{0}})\right]^{-\frac{1}{2}}.$
\end{proof}
The following result shows  that, Theorem \ref{eq37} gives a general form of Heisenberg-type  inequality    with a different constant. 
\begin{corollary}
	Let $s>0 .$ Then there exists a constant $c_{s, \alpha, \beta}>0$ such that, for all $f, g \in$ $L^2(\R,  A_{\alpha, \beta})$, we have 
	\begin{align}\label{eq38}
	\left\||(x, \xi)|^{s}\W_{g}^{(\alpha, \beta)}(f)\right\|_{L^2(\R^2,  A_{\alpha, \beta}\otimes \sigma_{\alpha, \beta})}\geq  c_{s, \alpha, \beta} ~\|f\|_{L^2(\R,  A_{\alpha, \beta})}\;\|g\|_{L^2(\R,  A_{\alpha, \beta})}.
	\end{align}
	In particular,
	\begin{align*}
	&\left\|x^s \W_{g}^{(\alpha, \beta)}(f)\right\|_{L^2(\R^2,  A_{\alpha, \beta}\otimes \sigma_{\alpha, \beta})}^2  +\left\|\xi ^s \W_{g}^{(\alpha, \beta)}(f)\right\|_{L^2(\R^2,  A_{\alpha, \beta}\otimes \sigma_{\alpha, \beta})}^2 \\
	&\geq  \frac{c_{s, \alpha, \beta }^2}{2^s}\; \|f\|_{L^2(\R,  A_{\alpha, \beta})}^{2}\;\|g\|_{L^2(\R,  A_{\alpha, \beta})}^{2}.\end{align*}
\end{corollary}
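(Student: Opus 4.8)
The plan is to deduce the global lower bound \eqref{eq38} from the \emph{local} upper bound of Theorem \ref{eq37} by a ball-splitting argument, and then to read off the ``in particular'' statement from the subadditivity of $t\mapsto t^s$. Fix a radius $r>0$ and set $B_r=\{(x,\xi)\in\R^2 : |(x,\xi)|<r\}$. Since $A_{\alpha,\beta}$ is integrable near the origin (because $2\alpha+1>0$) and the Plancherel density of $\sigma_{\alpha,\beta}$ is locally integrable, the ball $B_r$ is bounded in both coordinates and hence $A_{\alpha,\beta}\otimes\sigma_{\alpha,\beta}(B_r)<\infty$, so Theorem \ref{eq37} is applicable with the choice $\Sigma=B_r$.

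First I would split the total mass by Plancherel's formula \eqref{eq17},
\[
\|f\|_{L^2(\R,A_{\alpha,\beta})}^2\,\|g\|_{L^2(\R,A_{\alpha,\beta})}^2 = \left\|\chi_{B_r}\W_{g}^{(\alpha,\beta)}(f)\right\|_{L^2(\R^2,A_{\alpha,\beta}\otimes\sigma_{\alpha,\beta})}^2 + \left\|\chi_{B_r^c}\W_{g}^{(\alpha,\beta)}(f)\right\|_{L^2(\R^2,A_{\alpha,\beta}\otimes\sigma_{\alpha,\beta})}^2 .
\]
On the complement $B_r^c$ one has $|(x,\xi)|\geq r$, which gives the Chebyshev-type estimate
\[
\left\|\chi_{B_r^c}\W_{g}^{(\alpha,\beta)}(f)\right\|_{L^2(\R^2,A_{\alpha,\beta}\otimes\sigma_{\alpha,\beta})}^2 \leq r^{-2s}\left\||(x,\xi)|^s\W_{g}^{(\alpha,\beta)}(f)\right\|_{L^2(\R^2,A_{\alpha,\beta}\otimes\sigma_{\alpha,\beta})}^2 ,
\]
while Theorem \ref{eq37} with $\Sigma=B_r$ bounds the first term by $c(s,\alpha,\beta)^2\,\big[A_{\alpha,\beta}\otimes\sigma_{\alpha,\beta}(B_r)\big]\,\big\||(x,\xi)|^s\W_{g}^{(\alpha,\beta)}(f)\big\|^2$. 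Adding the two estimates yields
\[
\|f\|_{L^2(\R,A_{\alpha,\beta})}^2\,\|g\|_{L^2(\R,A_{\alpha,\beta})}^2 \leq \Big(c(s,\alpha,\beta)^2\,A_{\alpha,\beta}\otimes\sigma_{\alpha,\beta}(B_r) + r^{-2s}\Big)\left\||(x,\xi)|^s\W_{g}^{(\alpha,\beta)}(f)\right\|_{L^2(\R^2,A_{\alpha,\beta}\otimes\sigma_{\alpha,\beta})}^2 ,
\]
so that taking square roots gives \eqref{eq38} with $c_{s,\alpha,\beta}=\big(c(s,\alpha,\beta)^2\,A_{\alpha,\beta}\otimes\sigma_{\alpha,\beta}(B_r)+r^{-2s}\big)^{-1/2}$, a constant manifestly different from the one extracted in Theorem \ref{eq39}.

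For the ``in particular'' statement I would invoke the elementary pointwise bound $|(x,\xi)|^{2s}=(x^2+\xi^2)^s\leq 2^s(|x|^{2s}+|\xi|^{2s})$, which is the same inequality $|a+b|^s\leq 2^s(|a|^s+|b|^s)$ already used in the proof of Theorem \ref{eq39}. Integrating $|\W_{g}^{(\alpha,\beta)}(f)|^2$ against it produces
\[
\left\||(x,\xi)|^s\W_{g}^{(\alpha,\beta)}(f)\right\|_{L^2(\R^2,A_{\alpha,\beta}\otimes\sigma_{\alpha,\beta})}^2 \leq 2^s\left(\left\|x^s\W_{g}^{(\alpha,\beta)}(f)\right\|_{L^2(\R^2,A_{\alpha,\beta}\otimes\sigma_{\alpha,\beta})}^2+\left\|\xi^s\W_{g}^{(\alpha,\beta)}(f)\right\|_{L^2(\R^2,A_{\alpha,\beta}\otimes\sigma_{\alpha,\beta})}^2\right),
\]
and combining this with the squared form of \eqref{eq38} gives at once the claimed lower bound $\tfrac{c_{s,\alpha,\beta}^2}{2^s}\|f\|^2\|g\|^2$ for the sum of the two one-dimensional moments.

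I do not anticipate any genuine obstacle: the argument is a routine combination of a Chebyshev estimate on $B_r^c$, the local inequality of Theorem \ref{eq37} on $B_r$, and Plancherel's identity. The only two points deserving a line of care are the verification that $A_{\alpha,\beta}\otimes\sigma_{\alpha,\beta}(B_r)<\infty$, which legitimises applying Theorem \ref{eq37} to a ball, and the bookkeeping of the constants. The particular value of $r$ is irrelevant to the existence of a positive $c_{s,\alpha,\beta}$, though one could optimise over $r>0$ to obtain a sharper constant; indeed the estimate \eqref{eq36} already obtained in the proof of Theorem \ref{eq39} provides an alternative, more direct route to \eqref{eq38}.
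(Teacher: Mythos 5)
Your proposal is correct and follows essentially the same route as the paper's proof: split $\|\W_{g}^{(\alpha,\beta)}(f)\|^2$ via Plancherel over $B_r$ and $B_r^c$, apply Theorem \ref{eq37} on the ball and the Chebyshev bound $|(x,\xi)|\geq r$ off it, then use $|a+b|^s\leq 2^s(|a|^s+|b|^s)$ for the second assertion. The only (immaterial) difference is that the paper optimises the resulting bound over $r>0$, whereas you fix an arbitrary $r$, which still yields a positive constant $c_{s,\alpha,\beta}$ as required.
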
 
\begin{proof}
	Let $r>0$  and $B_{r}=\{(x, \xi) \in \R^2 : ~|(x, \xi)|<r \}$ be the ball with  centre at origin  and radius $r$ in 
	$\R^2$.   Using   Plancherel's formula (\ref{eq17}) and Theorem  \ref{eq37}, we  obtain
	$$
	\begin{aligned}
	\Vert f \Vert_{L^2(\R,  A_{\alpha, \beta})}^2 \; \Vert g \Vert_{L^2(\R,  A_{\alpha, \beta})}^2
	&=	\left\Vert \W^{(\alpha, \beta)}_g(f) \right\Vert_{L^2(\R^2,  A_{\alpha, \beta}\otimes \sigma_{\alpha, \beta})}^2 \\
	&=	 	 \left \|  \chi_{B_r} \W^{(\alpha, \beta)}_g(f) \right\|_{L^2(\R^2,  A_{\alpha, \beta}\otimes \sigma_{\alpha, \beta})}^2+\left \|   \chi_{B_r^{c}}  \W^{(\alpha, \beta)}_g(f) \right\|_{L^2(\R^2,  A_{\alpha, \beta}\otimes \sigma_{\alpha, \beta})}^2  \\
	& \leq c(s, \alpha, \beta)^2\left[A_{\alpha, \beta} \otimes \sigma_{\alpha, \beta}(B_r)\right]\left\||(x, \xi)|^{s}\W_{g}^{(\alpha, \beta)}(f)\right\|_{L^2(\R^2,  A_{\alpha, \beta}\otimes \sigma_{\alpha, \beta})}^2\\
	&\qquad + r^{-2s}\left\||(x, \xi)|^{s}\W_{g}^{(\alpha, \beta)}(f)\right\|_{L^2(\R^2,  A_{\alpha, \beta}\otimes \sigma_{\alpha, \beta})}^2.
	\end{aligned}
	$$
	We get  the inequality  (\ref{eq38}) by minimizing the right-hand side of the above  inequality over $r>0$. Now, proceeding  similarly  as in the proof of Theorem 
	\ref{eq39}, we obtain	
	\begin{align*}
	&2^s\left\|x^s \W_{g}^{(\alpha, \beta)}(f)\right\|_{L^2(\R^2,  A_{\alpha, \beta}\otimes \sigma_{\alpha, \beta})}^2  +2^s\left\|\xi ^s \W_{g}^{(\alpha, \beta)}(f)\right\|_{L^2(\R^2,  A_{\alpha, \beta}\otimes \sigma_{\alpha, \beta})}^2 \\
	&\geq  c_{s, \alpha, \beta }^2\; \|f\|_{L^2(\R,  A_{\alpha, \beta})}^{2}~\|g\|_{L^2(\R,  A_{\alpha, \beta})}^{2},\end{align*} and this completes the proof.
\end{proof}

\subsection{Heisenberg-type uncertainty inequality via the $k$-entropy}
%The main advantage of entropy is to measure the decay of a function $f$ so that it was exciting to localize the entropy of a probability measure and its ...
In this subsection,  we study the localization of the $k$-entropy of the windowed Opdam--Cherednik transform over the space $\mathbb{R}^{2 }$. Before going to prove the main result, we first need the following definition. 
\begin{definition}\quad
	\begin{enumerate} 
			\item A probability density function $\rho$ on $\mathbb{R}^{2 }$ is a non-negative measurable function on $\mathbb{R}^{2 }$ satisfying
		$$
		\iint_{\mathbb{R}^{2 }} \rho(x, \xi)~d(A_{\alpha, \beta} \otimes \sigma_{\alpha, \beta})(x, \xi)  =1.
		$$
		\item 	Let  $\rho$  be a probability density function  on $\mathbb{R}^{2 }$. Then the $k$-entropy of  $\rho$   is defined by
		$$
		E_{k}(\rho):=-\iint_{\mathbb{R}^{2 }} \ln (\rho(x, \xi))~ \rho(x, \xi)~d(A_{\alpha, \beta} \otimes \sigma_{\alpha, \beta})(x, \xi),
		$$
		whenever the integral on the right-hand side is well defined. 
	\end{enumerate}
\end{definition}
In the following, we prove the   main result of this subsection.
\begin{theorem}
	Let $g \in L^2(\R,  A_{\alpha, \beta})$ be a non-zero window function and
	$f \in L^2(\R,  A_{\alpha, \beta})$   such that $f\neq 0$.  Then, we have 
	\begin{align}\label{eq42}
	E_{k}(|\W_{g}^{(\alpha, \beta)}(f)|^{2}) \geq -2 \ln \left(\|f\|_{L^2(\R,  A_{\alpha, \beta})} \;\|g\|_{L^2(\R,  A_{\alpha, \beta})}\right)  \|f\|_{L^2(\R,  A_{\alpha, \beta})}^{2}\;\|g\|_{L^2(\R,  A_{\alpha, \beta})}^{2}.
	\end{align}
\end{theorem}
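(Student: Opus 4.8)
The plan is to exploit two elementary but complementary facts about the non-negative function $\rho:=|\W_{g}^{(\alpha,\beta)}(f)|^{2}$: a uniform pointwise bound and an exact value for its total mass. These are precisely the boundedness inequality \eqref{eq21} and Plancherel's formula \eqref{eq17}, and combined with the monotonicity of the logarithm they deliver the estimate almost immediately.

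First I would record the pointwise bound. By the $L^\infty$-estimate \eqref{eq21},
$$
\rho(x,\xi)=\left|\W_{g}^{(\alpha,\beta)}(f)(x,\xi)\right|^{2}\leq \|f\|_{L^2(\R, A_{\alpha,\beta})}^{2}\,\|g\|_{L^2(\R, A_{\alpha,\beta})}^{2}
$$
for every $(x,\xi)\in\R^2$. Next, by Plancherel's formula \eqref{eq17},
$$
\iint_{\R^2}\rho(x,\xi)\,d(A_{\alpha,\beta}\otimes\sigma_{\alpha,\beta})(x,\xi)=\|f\|_{L^2(\R, A_{\alpha,\beta})}^{2}\,\|g\|_{L^2(\R, A_{\alpha,\beta})}^{2}.
$$
Write $M:=\|f\|_{L^2(\R, A_{\alpha,\beta})}^{2}\,\|g\|_{L^2(\R, A_{\alpha,\beta})}^{2}$ for this common quantity, so that $\rho$ has supremum at most $M$ and integrates to exactly $M$.

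Then, since $\ln$ is increasing, on the set $\{\rho>0\}$ the pointwise bound gives $\ln\rho(x,\xi)\leq\ln M$, hence $-\ln\rho(x,\xi)\geq-\ln M$. Multiplying by $\rho(x,\xi)\geq0$ (and using the convention $\rho\ln\rho=0$ where $\rho=0$) and integrating against $A_{\alpha,\beta}\otimes\sigma_{\alpha,\beta}$ yields
$$
E_k(\rho)=-\iint_{\R^2}\ln(\rho)\,\rho\,d(A_{\alpha,\beta}\otimes\sigma_{\alpha,\beta})\geq-\ln M\iint_{\R^2}\rho\,d(A_{\alpha,\beta}\otimes\sigma_{\alpha,\beta})=-M\ln M.
$$
Since $\ln M=2\ln\!\left(\|f\|_{L^2(\R, A_{\alpha,\beta})}\,\|g\|_{L^2(\R, A_{\alpha,\beta})}\right)$, substituting back for $M$ reproduces exactly the asserted inequality \eqref{eq42}.

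I do not expect a genuine obstacle here: the conclusion is a direct consequence of the sup-norm bound and Plancherel's identity together with the monotonicity of the logarithm, with no delicate estimate required. The only point deserving care is the well-definedness of the entropy integral, i.e.\ the integrability of $\rho\ln\rho$; this is already presupposed in the definition of $E_k$, and on $\{\rho=0\}$ one adopts the standard convention $\rho\ln\rho=0$ (consistent with $\lim_{t\to0^{+}}t\ln t=0$).
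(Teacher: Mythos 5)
Your proof is correct and takes essentially the same approach as the paper's: both arguments rest on exactly the two ingredients you identify, the sup-norm bound \eqref{eq21} and Plancherel's formula \eqref{eq17}, combined with the monotonicity of the logarithm. The paper merely packages the identical computation through a normalization step (setting $\phi=f/\|f\|_{L^2(\R,A_{\alpha,\beta})}$ and $\psi=g/\|g\|_{L^2(\R,A_{\alpha,\beta})}$, observing that $E_{k}\bigl(|\W_{\psi}^{(\alpha,\beta)}(\phi)|^{2}\bigr)\geq 0$ from the pointwise bound, and then transferring back via the scaling identity for the entropy, which implicitly uses Plancherel), whereas you argue directly with the pointwise comparison $-\ln\rho\geq-\ln M$ integrated against $\rho$; the mathematical content is the same.
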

\begin{proof}
	First, we assume that $ \|f\|_{L^2(\R,  A_{\alpha, \beta})}=\|g\|_{L^2(\R,  A_{\alpha, \beta})}=1$. For any $(x, \xi )\in \mathbb{R}^2$, using   the relation (\ref{eq21}), we get 
	$$
	|\W_{g}^{(\alpha, \beta)}(f)(x, \xi)| \leq \; \|f\|_{L^2(\R,  A_{\alpha, \beta})}\;\|g\|_{L^2(\R,  A_{\alpha, \beta})}=1.
	$$
	Consequently,  $\ln (|\W_{g}^{(\alpha, \beta)}(f)|)\leq 0$ and   therefore  $E_{k} \big(|\W_{g}^{(\alpha, \beta)}(f)|\big) \geq 0$. The desired inequality (\ref{eq42}) holds  trivially  if the entropy $E_{k} \big(|\W_{g}^{(\alpha, \beta)}(f)|\big)$ is infinite. Now, suppose  that the entropy $E_{k} \big(|\W_{g}^{(\alpha, \beta)}(f)|\big)$ is finite. Let $ f$ and $g  $ be two non-zero functions in $L^2(\R,A_{\alpha, \beta})$. We define 
	$$
	\phi=\frac{f}{\|f\|_{L^2(\R,A_{\alpha, \beta})}} ~\text { and }~ \psi=\frac{g}{\|g\|_{L^2(\R,A_{\alpha, \beta})}}.
	$$
	Then $\phi , ~ \psi \in  L^2(\R,A_{\alpha, \beta})$ with    $\|\phi\|_{L^2(\R,A_{\alpha, \beta})}=\|\psi\|_{L^2(\R,A_{\alpha, \beta})}=1$ and consequently 
	\begin{align}\label{eq43}
	E_{k} \big(|\W_{\psi}^{(\alpha, \beta)}(\phi)|\big) \geq 0.
	\end{align} 
	Since  $\W_{\psi}^{(\alpha, \beta)}(\phi)=\frac{1}{\|f\|_{L^2(\R,A_{\alpha, \beta})}\|g\|_{L^2(\R,A_{\alpha, \beta})} } \W_{g}^{(\alpha, \beta)}(f)$, we have 
	\begin{align*}
	E_{k} \big(|\W_{\psi}^{(\alpha, \beta)}(\phi)|^{2}\big)=& -\iint_{\mathbb{R}^{2 }} \ln (|\W_{\psi}^{(\alpha, \beta)}(\phi)(x, \xi)|^{2})~ |\W_{\psi}^{(\alpha, \beta)}(\phi)(x, \xi)|^{2} ~d(A_{\alpha, \beta} \otimes \sigma_{\alpha, \beta})(x, \xi) \\
	& =\frac{1}{ \|f\|_{L^2(\R,A_{\alpha, \beta})}^2\|g\|_{L^2(\R,A_{\alpha, \beta})}^2}  E_{k} ( |\W_{g}^{(\alpha, \beta)}(f) | ^{2} ) \\& \qquad +2 \ln \big(\|f\|_{L^2(\R,A_{\alpha, \beta}) }\;\|g\|_{L^2(\R, A_{\alpha, \beta}) }\big).
	\end{align*}
	Finally,  from (\ref{eq43}),  we obtain  
	$$
	E_{k} ( |\W_{g}^{(\alpha, \beta)}(f) |^{2} ) \geq -2 \ln \left( \|f\|_{L^2(\R,A_{\alpha, \beta})}\;\|g\|_{L^2(\R,A_{\alpha, \beta})}\right) \|f\|_{L^2(\R,A_{\alpha, \beta})}^{2}\;\|g\|_{L^2(\R,A_{\alpha, \beta})}^{2}.
	$$
	This completes the proof of the theorem.
\end{proof}

\subsection{Application in signal processing}

Here, we present an application of these uncertainty principles in compressive sensing. Mainly, we show that uncertainty principles can be used for the separation of signals. The signal separation problem is an extremely ill-defined signal processing problem, which is also important in many engineering problems. It consists in splitting a signal $f$ into a sum of components $f_k$ of different nature: $f = f_1 + f_2 + \cdots +f_n$. Since this notion of different nature often makes sense in applied domains, it is generally extremely difficult to formalize mathematically. Sparsity offers a convenient framework for approaching such a notion. More precisely, signals of different natures can be sparsely represented in different waveform systems. Given a union of several frames $\mathcal U^{(1)}, \; \mathcal U^{(2)}, \cdots, \; \mathcal U^{(n)}$ in a Hilbert space $\mathbb H$, the separation problem can be given various formulations, among which the analysis and synthesis formulations (see \cite{ric14}). 

Let $U^{(k)}$ denotes the analysis operator of frame $k$. In the case of $n$ frames, applying these uncertainty principles and using the similar method as in \cite{ric14}, it can be proven that if one frame gives a splitting $f= f_1 + f_2 + \cdots + f_n$, obtained via any algorithm, if $\left\|U^{(1)} f_1\right\|+\left\|U^{(2)} f_2\right\| + \cdots + \left\|U^{(n)} f_n\right\|$ is small enough, then this splitting is necessarily optimal. More precisely, we have the following result. Let $\mathcal{U}^{(1)}, \; \mathcal{U}^{(2)}, \cdots, \; \mathcal U^{(n)}$ denote $n$ frames in $\mathbb{H}$. For any $f \in \mathbb{H}$, let $f= f_1 + f_2+ \cdots +f_n$ denote a splitting such that
$$ \left\|U^{(1)} f_1\right\|+\left\|U^{(2)} f_2\right\|+\cdots +\left\|U^{(n)} f_n\right\| < \frac{1}{\mu_{\star}}, $$
where $\mu_{\star}$ is the generalized coherence function (see \cite{ric14}). Then, using these uncertainty principles for the windowed Opdam--Cherednik transform, we obtain that this splitting minimizes $\left\|U^{(1)} f_1\right\|+\left\|U^{(2)} f_2\right\|+ \cdots + \left\|U^{(n)} f_n\right\|$. Similarly, using these uncertainty principles one can study sparsity-based algorithms for window optimization in time-frequency analysis. Many other applications can be given using uncertainty principles for the windowed Opdam--Cherednik transform.

\section*{Acknowledgments}
The first author gratefully acknowledges the support provided by IIT Guwahati, Government of India. The second author is deeply indebted to Prof. Nir Lev for several fruitful discussions and generous comments. The authors wish to thank the anonymous referees for their helpful comments and suggestions that helped to improve the quality of the paper.

\section*{Conflict of interest} 
No potential conflict of interest was reported by the authors.

\section*{ORCID}

{\it Shyam Swarup Mondal}  https://orcid.org/0000-0002-9778-0757 

{\it Anirudha Poria}  https://orcid.org/0000-0002-0224-3642

\end{document}